\documentclass[a4paper,10pt]{article}
\usepackage[latin1]{inputenc}
\usepackage[T1]{fontenc}
\usepackage{amsmath}
\usepackage{amsfonts}
\usepackage{amstext}
\usepackage{amsthm}
\usepackage[all]{xy}
\usepackage{geometry}
\usepackage{graphics}
\usepackage{epsfig}
\usepackage{subfigure}
\usepackage{slashed}
\usepackage{color}
\usepackage{amssymb}
\usepackage{srcltx}
\newtheorem{theorem}{Theorem}[section]
\newtheorem{lemma}[theorem]{Lemma}
\newtheorem{defi}[theorem]{Definition}
\newtheorem{rem}[theorem]{Remark}
\newtheorem{prop}[theorem]{Proposition}

\DeclareMathOperator{\supp}{supp}
\DeclareMathOperator{\im}{im}
\DeclareMathOperator{\ind}{ind}
\DeclareMathOperator{\sind}{s-ind}

\DeclareMathOperator{\sfl}{sf}

\DeclareMathOperator{\GL}{GL}

\DeclareMathOperator{\Sp}{Sp}
\DeclareMathOperator{\gra}{graph}

\title{A family index theorem for periodic Hamiltonian systems and bifurcation}
\author{Nils Waterstraat}

\begin{document}
\date{}
\maketitle

\footnotetext[1]{{\bf 2010 Mathematics Subject Classification: Primary 58J20; Secondary 37J20,37J45,19K56 }}
\footnotetext[2]{Keywords: Hamiltonian systems, index bundle, spectral flow, multiparameter bifurcation}
\footnotetext[3]{The author was supported by a postdoctoral fellowship of
the German Academic Exchange Service (DAAD) and by the program "Professori Visitatori Junior" of GNAMPA-INdAM.}

\begin{abstract}
We prove an index theorem for families of linear periodic Hamiltonian systems, which is reminiscent of the Atiyah-Singer index theorem for selfadjoint elliptic operators. For the special case of one-parameter families, we compare our theorem with a classical result of Salamon and Zehnder. Finally, we use the index theorem to study bifurcation of branches of periodic solutions for families of nonlinear Hamiltonian systems.
\end{abstract}

\section{Introduction}
Let $X$ be a compact topological space and $Y\subset X$ a closed subspace. We denote by

\begin{align}\label{standardmatrix}
J=\begin{pmatrix}
0&-I_n\\
I_n&0
\end{pmatrix}
\end{align}
the standard symplectic matrix of dimension $2n$ and set $S^1=\mathbb{R}/2\pi\mathbb{Z}$.
We consider Hamiltonian systems

\begin{equation}\label{equation}
\left\{
\begin{aligned}
Ju'(t)+\nabla_u&\mathcal{H}(\lambda,t,u(t))=0,\quad t\in [0,2\pi]\\
u(0)&=u(2\pi),
\end{aligned}
\right.
\end{equation}
where $\mathcal{H}:X\times \mathbb{R}\times\mathbb{R}^{2n}\rightarrow\mathbb{R}$ is a continuous function such that $\mathcal{H}_\lambda:\mathbb{R}\times\mathbb{R}^{2n}\rightarrow\mathbb{R}$, $\lambda\in X$, is $C^2$ and all its partial derivatives depend continuously on the parameter $\lambda\in X$. Moreover, we assume that $\mathcal{H}(\lambda,t,u)$ is $2\pi$-periodic in $t$, $\nabla_u\mathcal{H}(\lambda,t,0)=0$ for all $(\lambda,t)\in X\times\mathbb{R}$ and that there exist constants $a,b\geq 0$ and $r>1$ such that

\begin{align}\label{growthcond}
\begin{split}
|\nabla_u\mathcal{H}(\lambda,t,u)|&\leq a+b|u|^r,\\
|D_u\nabla_u\mathcal{H}(\lambda,t,u)|&\leq a+b|u|^r,\quad (\lambda,t,u)\in X\times\mathbb{R}\times\mathbb{R}^{2n}.
\end{split}
\end{align}
Note that in particular $u\equiv 0$ is a solution of all systems \eqref{equation}.\\
Under the assumption \eqref{growthcond} the equations \eqref{equation} have a variational formulation on the Hilbert space $H^\frac{1}{2}(S^1,\mathbb{R}^{2n})$, that is, there exists a family $\psi:X\times H^\frac{1}{2}(S^1,\mathbb{R}^{2n}) \rightarrow\mathbb{R}$ of functionals, such that $u\in H^\frac{1}{2}(S^1,\mathbb{R}^{2n})$ is a (weak) solution of \eqref{equation} for the parameter value $\lambda\in X$ if and only if it is a critical point of $\psi_\lambda:=\psi(\lambda,\cdot)$. Let us denote by $L=\{L_\lambda\}_{\lambda\in X}$, the family of Hessians of $\psi$ at the branch of trivial critical points $u\equiv 0$, which is a family of bounded selfadjoint Fredholm operators on $H^\frac{1}{2}(S^1,\mathbb{R}^{2n})$. Note that these operators play a fundamental role in the study of existence and multiplicity of solutions of the nonlinear equations \eqref{equation} (cf. eg. \cite[Chapter IV]{Chang}). We use a relative version of the classical index bundle from \cite{AtiyahSinger} to assign to the family $L$ a $K$-theory class in $K^{-1}(X,Y)$, which we denote by $\mu_{Morse}(\mathcal{H})$ and call the \textit{generalised Morse index} of \eqref{equation}.\\
Now consider the family of linear Hamiltonian systems

\begin{align}\label{diffop}
Ju'+S_\lambda(\cdot)u=0,\quad\lambda\in X,
\end{align}
where

\begin{align}\label{Smatrix}
S_\lambda(t):=D_u\nabla_u\mathcal{H}(\lambda,t,0),\quad(\lambda,t)\in X\times\mathbb{R},
\end{align} 
is the Hessian of $\mathcal{H}(\lambda,t,\cdot)$ at $0\in\mathbb{R}^{2n}$. We regard the left hand side of \eqref{diffop} as unbounded selfadjoint Fredholm operators $\mathcal{A}_\lambda$, $\lambda\in X$, on $L^2(S^1,\mathbb{R}^{2n})$ having domains $H^{1}(S^1,\mathbb{R}^{2n})$. By using again our variant of the index bundle as for the generalised Morse index $\mu_{Morse}(\mathcal{H})$, we assign to the family $\mathcal{A}=\{\mathcal{A}_\lambda\}_{\lambda\in X}$ a relative $K$-theory class $\mu_{spec}(\mathcal{H})\in K^{-1}(X,Y)$ called the \textit{spectral index} of \eqref{equation}. \\
Let us recall that for a family $D=\{D_\lambda\}_{\lambda\in X}$, of selfadjoint elliptic operators acting on a Hermitian bundle over a closed manifold, the classical index bundle \cite{AtiyahSinger} defines the \textit{analytical index} in the Atiyah-Singer theorem \cite{AtiyahPatodi}. The analytical index belongs to the odd $K$-theory group $K^{-1}(X)$ and according to the Atiyah-Singer theorem it is given by the \textit{topological index} of the family $D$, which only depends on the family of principal symbols of $D$ and topological invariants of the Hermitian bundle. Note that our operators $\mathcal{A}_\lambda$, $\lambda\in X$, in the definition of $\mu_{spec}(\mathcal{H})$ are in fact selfadjoint elliptic differential operators acting on a product bundle over the manifold $S^1$. However, the analytical index of the family $\mathcal{A}$ is easily seen to vanish by deforming $\mathcal{A}$ to a family of invertible operators. As we shall see later, we can avoid triviality of $\mu_{spec}(\mathcal{H})$ in general because we consider relative $K$-theory classes in $K^{-1}(X,Y)$.\\
Finally, the family of monodromy matrices of the differential equations \eqref{diffop} induce canonically an element of $K^{-1}(X,Y)$ which we denote by $\mu_{mon}(\mathcal{H})$ and call the \textit{monodromy index}. Our main theorem states the equality of $\mu_{Morse}(\mathcal{H})$, $\mu_{spec}(\mathcal{H})$ and $\mu_{mon}(\mathcal{H})$ and so computes the rather involved objects $\mu_{Morse}(\mathcal{H})$ and $\mu_{spec}(\mathcal{H})$ in terms of a simple matrix family.\\  
Let us now describe the content and the structure of the paper. The second section is devoted to the construction of the relative version of the index bundle for families of generally unbounded selfadjoint Fredholm operators having a fixed domain. In particular, we discuss its relation to the spectral flow, and we state an abstract index theorem holding at the operator theoretic level which will later be used for showing the equality of $\mu_{Morse}(\mathcal{H})$ and $\mu_{spec}(\mathcal{H})$ in our main index theorem for Hamiltonian systems. At this point we recall briefly the Morse index theorem for geodesics in semi-Riemannian manifolds \cite{Pejsachowicz} and explain that our abstract index theorem immediately solves a problem that was left open in the topological proof of the Morse index theorem in the authors article \cite{MorseIch}. The reader may find in Appendix A the basics of $K$-theory that are needed to follow our arguments in the second section and the rest of the paper. In the third section we introduce our indices $\mu_{Morse}(\mathcal{H})$, $\mu_{spec}(\mathcal{H})$ and $\mu_{mon}(\mathcal{H})$ and we state the index theorem for Hamiltonian systems. A particular special case of our theorem appears if the parameter space $X$ is a compact interval and $Y$ is its boundary, which we discuss in the fourth section. Then the operator families $L$ and $\mathcal{A}$ are paths of selfadjoint Fredholm operators and moreover there exists an isomorphism $c_1:K^{-1}(X,Y)\rightarrow\mathbb{Z}$ induced by the first Chern number. Under this isomorphism the generalised Morse index $\mu_{Morse}(\mathcal{H})$ and the spectral index $\mu_{spec}(\mathcal{H})$ correspond to the spectral flows of the paths $L$ and $\mathcal{A}$ respectively, while the monodromy index $\mu_{mon}(\mathcal{H})$ becomes the winding number of a certain planar vector field. Let us mention that the spectral flows of $\mathcal{A}$ and $L$ were identified with the Conley-Zehnder index of the path of monodromy matrices of \eqref{diffop} by Salamon and Zehnder in \cite{Salamon-Zehnder} and Fitzpatrick, Pejsachowicz, Recht in \cite{SFLPejsachowiczII}, respectively. The integer that we obtain from $\mu_{mon}(\mathcal{H})$ in this case seems to us rather different in nature from the Conley-Zehnder index. As an application, we use well known methods for computing winding numbers of planar vector fields in order to calculate our indices explicitly for analytic Hamiltonian systems under additional assumptions. The reader may find a brief survey on the spectral flow in Appendix B, which, however, is not intended to be exhaustive and rather adapted to our setting. In the fifth section we consider bifurcation of branches of periodic solutions of the nonlinear Hamiltonian systems \eqref{equation}. We improve the main result of \cite{SFLPejsachowiczII} on paths of Hamiltonian systems to families by using the approach of \cite{JacBifIch} and our main index theorem. The final sixth section is devoted to the proofs of the abstract index theorem for the index bundle from the second section, the index theorem for Hamiltonian systems from the third section and the bifurcation theorem stated in the fifth section.


\section{On the index bundle}\label{section:indbundlewhole}

We introduce in this section a way to construct the index bundle for families of (generally unbounded) selfadjoint Fredholm operators having a fixed domain. In Section \ref{section:indbundle} we recall at first the definition of the index bundle for Fredholm morphisms between Banach bundles from \cite{indbundleIch} under the additional assumption that the target bundle is a product. The following Section \ref{section:sindbundle} is divided into two parts. In Section \ref{section:prop} we define the index bundle for selfadjoint Fredholm operators and discuss its relation to the spectral flow as defined by Robbin and Salamon in \cite{Robbin-Salamon}. Afterwards, in Section \ref{section:reduction} we state an abstract index theorem which allows to assign to a family of unbounded selfadjoint Fredholm operators a family of bounded selfadjoint Fredholm operators having the same index bundle. The abstract index theorem will play an essential role in the proof of our index theorem for Hamiltonian systems. 


\subsection{The index bundle for Fredholm morphisms}\label{section:indbundle}
The index bundle of a continuous family of bounded Fredholm operators parametrised by a compact space is an element of the $K$-theory group of the parameter space with the same properties as the ordinary integral-valued index of a single Fredholm operator. It was introduced in \cite{AtiyahSingerFam} in the proof of the family-version of the famous Atiyah-Singer theorem and independently by Jänich in \cite{Jaenich}. Here we recall its definition in the more general case of Fredholm morphisms between Banach bundles, where we make throughout the simplifying assumption that the target bundle is a product. The more involved construction for Fredholm morphisms mapping into general Banach bundles can be found in \cite{indbundleIch}. However, in contrast to the aforementioned references, we allow locally compact parameter spaces and we define the index bundle as a relative $K$-theory class.\\
Let $X$ be a locally compact topological space and $Y\subset X$ a closed subspace. Let $\mathcal{E}$ be Banach bundle over $X$ and $F$ a Banach space. In what follows, $\Theta(F)$ stands for the product bundle over $X$ with fibre $F$. We denote by $\mathcal{F}_{0,c}(\mathcal{E},\Theta(F))$ the set of all bundle morphisms $T:\mathcal{E}\rightarrow\Theta(F)$ such that

\begin{itemize}
	\item $T_\lambda:\mathcal{E}_\lambda\rightarrow F$ is a Fredholm operator of index $0$, $\lambda\in X$, 
	\item there exists a compact subset $K\subset X$ such that $T_\lambda$ is invertible for all $\lambda\in X\setminus K$.
\end{itemize}
We include a proof of the following well known result for later reference in Section \ref{proof:reduction}.

\begin{lemma}\label{transversal}
For any $T\in\mathcal{F}_{0,c}(\mathcal{E},\Theta(F))$ there exists a finite dimensional subspace $V\subset F$ such that

\begin{align}\label{transversality}
\im(T_\lambda)+V=F,\quad \lambda\in X.
\end{align} 
\end{lemma}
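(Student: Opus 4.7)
The plan is a standard compactness argument, exploiting the openness of surjectivity for continuous families of Fredholm operators.

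First I would dispose of the region outside $K$: by the second bullet in the definition of $\mathcal{F}_{0,c}(\mathcal{E},\Theta(F))$, $T_\lambda$ is invertible and hence surjective for $\lambda\in X\setminus K$, so \eqref{transversality} is automatic there regardless of the choice of $V$. The problem thus reduces to producing a single finite dimensional $V\subset F$ that is transversal to $\im(T_\lambda)$ for every $\lambda$ in the compact set $K$.

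Next, for a fixed $\lambda_0\in K$, since $T_{\lambda_0}:\mathcal{E}_{\lambda_0}\to F$ is Fredholm of index $0$, its cokernel is finite dimensional. I would pick a finite dimensional subspace $V_{\lambda_0}\subset F$ complementing $\im(T_{\lambda_0})$, so that $\im(T_{\lambda_0})+V_{\lambda_0}=F$. To promote this transversality to a neighbourhood, I would work in a local trivialisation of $\mathcal{E}$ around $\lambda_0$, identify $T$ with a continuous map into a space of bounded operators, and consider the auxiliary family
\begin{align*}
S_\lambda:\mathcal{E}_\lambda\oplus V_{\lambda_0}\to F,\qquad S_\lambda(e,v)=T_\lambda e+v.
\end{align*}
Because $V_{\lambda_0}$ is finite dimensional, $S_\lambda$ is again a continuous family of Fredholm operators, and at $\lambda=\lambda_0$ it is surjective. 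Since the set of surjective Fredholm operators is open in the operator norm, there is an open neighbourhood $U_{\lambda_0}$ of $\lambda_0$ in $X$ such that $S_\lambda$ is surjective, i.e.\ $\im(T_\lambda)+V_{\lambda_0}=F$, for all $\lambda\in U_{\lambda_0}$.

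Finally, I would invoke compactness of $K$ to extract a finite subcover $U_{\lambda_1},\ldots,U_{\lambda_m}$ of $K$ from $\{U_{\lambda_0}\}_{\lambda_0\in K}$ and set $V:=V_{\lambda_1}+\cdots+V_{\lambda_m}$. This $V$ is finite dimensional, and for any $\lambda\in K$ there is an index $i$ with $\lambda\in U_{\lambda_i}$, giving $F=\im(T_\lambda)+V_{\lambda_i}\subset\im(T_\lambda)+V$; together with the trivial case $\lambda\in X\setminus K$ this yields \eqref{transversality} on all of $X$. The only subtle point is the openness of surjectivity for $S_\lambda$, which is routine once one notes that a small perturbation of a surjective Fredholm operator admits a bounded right inverse by a Neumann series argument; no other real obstacle arises.
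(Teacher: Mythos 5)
Your argument is correct and essentially the paper's own proof: localise in a trivialisation of $\mathcal{E}$, obtain a local transversal subspace $V_{\lambda_0}$ via an auxiliary operator that adjoins $V_{\lambda_0}$ to the domain, then use compactness of $K$ (with invertibility handling $X\setminus K$) and take the sum of finitely many such subspaces. The only cosmetic difference is that the paper restricts $T_\lambda$ to a closed complement $W_{\lambda_0}$ of $\ker(T_{\lambda_0})$ so that the auxiliary operator is invertible at $\lambda_0$ and openness of $\GL$ applies, whereas you keep the whole fibre and invoke openness of surjectivity for Fredholm operators via a bounded right inverse and a Neumann series; both routes are routine and equivalent here.
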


\begin{proof}
Let $\lambda_0\in X$ and $U_{\lambda_0}\subset X$ be an open neighbourhood of $\lambda_0$ such that $\mathcal{E}$ is trivial on $U_{\lambda_0}$ by a trivialisation $\varphi$. Let $\tilde{T}=T\circ\varphi^{-1}:U_{\lambda_0}\times E\rightarrow F$ be the corresponding family of bounded Fredholm operators with respect to this trivialisation, where $E$ denotes the model space of $\mathcal{E}$. Since $\tilde{T}_{\lambda_0}$ is Fredholm, there exists $V_{\lambda_0}\subset F$, $\dim V_{\lambda_0}<\infty$, and $W_{\lambda_0}\subset E$ closed such that
 
\[\im(\tilde T_{\lambda_0})\oplus V_{\lambda_0}=F, \qquad \ker(\tilde T_{\lambda_0})\oplus W_{\lambda_0}=E.\]
Now consider

\[A_{\lambda}:W_{\lambda_0}\times V_{\lambda_0}\rightarrow F,\;\; A_{\lambda}(w,v)=\tilde T_\lambda w+v\]
and recall that the set of invertible operators $\GL(W_{\lambda_0}\times V_{\lambda_0},F)$ is open in the space of bounded operators $\mathcal{L}(W_{\lambda_0}\times V_{\lambda_0},F)$ with respect to the norm topology. Because of $A_{\lambda_0}\in \GL(W_{\lambda_0}\times V_{\lambda_0},F)$ and the continuity of $A:U_{\lambda_0}\rightarrow \mathcal{L}(W_{\lambda_0}\times V_{\lambda_0},F)$, there exists a neighbourhood $\tilde U_{\lambda_0}\subset U_{\lambda_0}$ of $\lambda_0$ such that $A_\lambda\in \GL(W_{\lambda_0}\times V_{\lambda_0},F)$ for all $\lambda\in\tilde U_{\lambda_0}$ and hence

\[\im(T_\lambda)+V_{\lambda_0}=\im(\tilde T_\lambda)+V_{\lambda_0}=F\;\;\text{ for all }\; \lambda\in \tilde U_{\lambda_0}.\]
Let now $K\subset X$ be a compact subset such that $T_\lambda$ is invertible for all $\lambda\in X\setminus K$. We cover $K$ by a finite number of neighbourhoods $\tilde{U}_{\lambda_i}$, $i=1,\ldots,n$, such that for each $i$ there exists a finite dimensional subspace $V_{\lambda_i}$ such that

\[\im(T_\lambda)+V_{\lambda_i}=F\;\;\text{for all}\,\, \lambda\in\tilde{U}_{\lambda_i},\quad i=1,\ldots,n.\] 
Then $V:=V_1+\ldots+V_n$ defines a finite dimensional subspace of $F$ satisfying \eqref{transversality} on all of $X$. 
\end{proof}

Let $T\in\mathcal{F}_{0,c}(\mathcal{E},\Theta(F))$ be a Fredholm morphism and $V\subset F$ a finite dimensional subspace as in \eqref{transversality}. We obtain a surjective bundle morphism

\[\mathcal{E}\xrightarrow{T}\Theta(F)\rightarrow\Theta(F/V)\]
and according to \cite[III,\S 3]{Lang} the kernel of this morphism defines a finite dimensional subbundle $E(T,V)$ of $\mathcal{E}$ having the fibres

\[\{u\in\mathcal{E}_\lambda:\,T_\lambda u\in V\},\quad\lambda\in X.\]
Note that

\begin{align}\label{dimE}
\dim E(T,V)=\dim V
\end{align}
and $T$ restricts to a bundle morphism $T\mid_{E(T,V)}:E(T,V)\rightarrow\Theta(V)$. Moreover, $T_\lambda:\mathcal{E}_\lambda\rightarrow F$ is an isomorphism if and only if $T\mid_{E(T,V)_\lambda}$ is an isomorphism, $\lambda\in X$. 

\begin{defi}
Let $T\in\mathcal{F}_{c,0}(\mathcal{E},\Theta(F))$ be a Fredholm morphism and $Y\subset X$ closed such that $T_\lambda$ is invertible for all $\lambda\in Y$. The index bundle of $T$ is defined by

\begin{align*}
\ind(T)=[E(T,V),\Theta(V),T\mid_{E(T,V)}]\in K(X,Y),
\end{align*}
where $V\subset F$ is any finite dimensional subspace as in \eqref{transversality}.
\end{defi} 

The reader may check that the definition does not depend on the choice of the subspace $V\subset F$ and that the following fundamental properties hold:

\begin{itemize}
	\item[i)] Let $T\in\mathcal{F}_{0,c}(\mathcal{E},\Theta(F))$ be an isomorphism, i.e. $T_\lambda$ is invertible for all $\lambda\in X$. Then \[\ind(T)=0\in K(X,Y).\]
	\item[ii)] Let $\pi:X\times[0,1]\rightarrow X$ be the projection onto the first component. If $h\in\mathcal{F}_{0,c}(\pi^\ast\mathcal{E},\Theta(F))$ is a morphism such that $h_{(\lambda,t)}$ is invertible for all $(\lambda,t)\in Y\times[0,1]$, then 
\[\ind(h\mid_{X\times\{0\}})=\ind(h\mid_{X\times\{1\}})\in K(X,Y).\]
	 
	\item[iii)] Let $G$ be a Banach space, $T\in\mathcal{F}(\mathcal{E},\Theta(F))$ and $S\in\mathcal{F}_{0,c}(\Theta(F),\Theta(G))$ such that $T_\lambda,S_\lambda$ are invertible for all $\lambda\in Y$. Then
\[\ind(S\circ T)=\ind(S)+\ind(T)\in K(X,Y).\]
\end{itemize}


\subsection{The index bundle for selfadjoint families}\label{section:sindbundle}
The index bundle for families of selfadjoint Fredholm operators was introduced by Atiyah, Patodi and Singer in the proof of the index theorem for selfadjoint elliptic operators in \cite{AtiyahPatodi} and it is connected to the eta-invariant and the spectral flow (cf. also Proposition \ref{prop:sfl} below). We will give here a slightly different definition which is particularly adapted to families of operators of the type considered by Robbin and Salamon in \cite{Robbin-Salamon}.

\subsubsection{Definition, properties and spectral flow}\label{section:prop}
Let $W$ and $H$ be infinite dimensional complex Hilbert spaces with a dense injection $W\hookrightarrow H$. Let $\mathcal{L}(W,H)$ be the usual Banach space of all bounded operators and let $\mathcal{K}(W,H)$ be the subspace of $\mathcal{L}(W,H)$ consisting of all compact operators. We denote by $\mathcal{S}(W,H)\subset\mathcal{L}(W,H)$ the elements which are selfadjoint when considered as operators in $H$ with dense domain $W$. Finally, we let $\mathcal{FS}(W,H)\subset\mathcal{S}(W,H)$ consist of all selfadjoint Fredholm operators and we write $\mathcal{GS}(W,H)\subset\mathcal{FS}(W,H)$ for the subspace of invertible elements. In the case $W=H$ we will shorten notation as usual and write e.g. $\mathcal{L}(H,H)=\mathcal{L}(H)$. Note that $A\in\mathcal{S}(W,H)$ belongs to $\mathcal{FS}(W,H)$ if and only if $A$ has a closed range and a finite dimensional kernel. Moreover, $A\in\mathcal{FS}(W,H)$ is an element of $\mathcal{GS}(W,H)$ if and only if $\ker A=0$.\\
Now let $X$ be a compact topological space and $Y\subset X$ a closed subspace. We assume that $D:X\rightarrow\mathcal{FS}(W,H)$ is a continuous family such that $D_\lambda$ is invertible for all $\lambda\in Y$. If we regard $D$ as a bundle morphism between the product bundles $\Theta(W)$ and $\Theta(H)$ over $X$, then $D$ is Fredholm of index $0$ in each fibre and we may assign the index bundle $\ind(D)\in K(X,Y)$ as defined in the previous section. However, since the operators $D_\lambda$, $\lambda\in X$, are selfadjoint when considered as operators in $H$ having dense domain $W$, their spectra are real. Consequently, the homotopy 

\begin{align*}
h:[0,1]\times(X\times W)\rightarrow H,\quad h_{(t,\lambda)}u=D_\lambda u+it\,u
\end{align*}
deforms $D$ to a family of invertible operators such that $h_{(t,\lambda)}$ is invertible for all $(t,\lambda)\in[0,1]\times Y$. We conclude $\ind(D)=\ind(h_1)=0\in K(X,Y)$ by the properties i) and ii) of Section \ref{section:indbundle}.\\
Instead, we now define a new family of bounded operators in $\mathcal{L}(W,H)$ by the following suspension of the family $D$:

\begin{align*}
\overline{D}_zu=D_\lambda u+is\,u,\quad u\in W,\,\, z=(\lambda,s)\in X\times\mathbb{R}.
\end{align*} 
From the selfadjointness of $D_\lambda$, $\lambda\in X$, it follows that $\overline{D}_z$, $z=(\lambda,s)\in X\times\mathbb{R}$, is invertible if $s\neq 0$. For that reason, $\overline{D}$ can be regarded as a Fredholm morphism in $\mathcal{F}_{0,c}(\Theta(W),\Theta(H))$, where now $\Theta(W)$ and $\Theta(H)$ are product bundles over $X\times\mathbb{R}$. Moreover, since we assume that $D_\lambda$ is invertible for all $\lambda\in Y$, $\overline{D}_z$ is invertible whenever $z=(\lambda,s)$ belongs to $Y\times\mathbb{R}$.

\begin{defi}
Let $X$ be a compact topological space, $Y\subset X$ a closed subspace and $D:X\rightarrow\mathcal{FS}(W,H)$ a continuous family of selfadjoint Fredholm operators such that $D_\lambda$ is invertible for all $\lambda\in Y$. The index bundle of $D$ is defined by

\[\sind(D)=\ind(\overline{D})\in K^{-1}(X,Y).\]
\end{defi} 

The following properties of the index bundle for selfadjoint Fredholm operators can easily be derived from the assertions i)-iii) in Section \ref{section:indbundle}:

\begin{enumerate}
	\item[i)] If $D:X\rightarrow\mathcal{FS}(W,H)$ is such that $D_\lambda$ is invertible for all $\lambda\in X$, then \[\sind(D)=0\in K^{-1}(X,Y).\]
	\item[ii)] Let $X_0$ be a compact space, $Y_0\subset X_0$ closed and $f:(X_0,Y_0)\rightarrow(X,Y)$ continuous. Then
	
	\[	\sind(f^\ast\,D)=f^\ast\sind(D),\] 
	where $(f^\ast\,D)_\lambda=D_{f(\lambda)}$, $\lambda\in X_0$.
	\item[iii)] If $D_1,D_2:X\rightarrow\mathcal{FS}(W,H)$ are two families which are invertible on $Y$, then
	\[\sind(D_1\oplus D_2)=\sind(D_1)+\sind(D_2)\in K^{-1}(X,Y).\]
	\item[iv)] Let $h:[0,1]\times X\rightarrow\mathcal{FS}(W,H)$ be a homotopy such that $h(t,\lambda)$ is invertible for all $(t,\lambda)\in[0,1]\times Y$. Then
	
	\[\sind(h_0)=\sind(h_1)\in K^{-1}(X,Y).\]
	\item[v)] Let $K:X\rightarrow\mathcal{S}(H)$ be a family of selfadjoint operators such that $K_\lambda\mid_W\in\mathcal{K}(W,H)$, $\lambda\in X$. If $D_\lambda+t\,K_\lambda$ is invertible for all $\lambda\in Y$ and $t\in [0,1]$, then
	
	\[\sind(D+K)=\sind(D)\in K^{-1}(X,Y).\]
\end{enumerate}

\begin{rem}\label{rem:trivial}
The property v) implies in particular that $\sind(D+K)=\sind(D)$ if $Y=\emptyset$. Below we consider families of compact perturbations of fixed operators and hence a non-empty subspace $Y\subset X$ is essential for obtaining non-trivial index bundles.
\end{rem}

The \textit{spectral flow} is an integer-valued homotopy invariant of paths of selfadjoint Fredholm operators which computes the crossing through $0$ of the spectrum of the operators along the path (cf. Appendix B). It was originally introduced by Atiyah, Patodi and Singer in \cite{AtiyahPatodi} for closed paths, i.e. $X=S^1$, $Y=\emptyset$, and in this case it can be computed as the first Chern number of the analytical index as an element of $K^{-1}(S^1)$ (cf. \cite[\S 7]{AtiyahPatodi}). In view of Remark \ref{rem:trivial}, our aim is to obtain a similar result for generally non-closed paths in $\mathcal{FS}(W,H)$ having invertible endpoints, i.e. $X=I$, $Y=\partial I$, where $I$ denotes any compact interval in $\mathbb{R}$. First recall from \eqref{Chern} the definition of the isomorphism $c_1:K^{-1}(I,\partial I)\rightarrow\mathbb{Z}$.

\begin{prop}\label{prop:sfl}
Let $D:(I,\partial I)\rightarrow(\mathcal{FS}(W,H),\mathcal{GS}(W,H))$ be a path having invertible endpoints. Denote by $\sfl(D)$ its spectral flow. Then

\[\sfl(D)=c_1(\sind(D))\in\mathbb{Z}.\]
\end{prop}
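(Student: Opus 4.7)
The plan is to reduce the identity to a computation at a single non-degenerate crossing by homotopy and additivity, and then to verify it in a rank-one model.

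First, both $\sfl(D)$ and $c_1(\sind(D))$ are invariant under homotopies of $D$ through paths in $\mathcal{FS}(W,H)$ that keep the endpoints in $\mathcal{GS}(W,H)$: this is property (iv) for $\sind$, and it is standard for $\sfl$ (see Appendix~B). I would use a generic perturbation in the sense of Robbin-Salamon to replace $D$ by a path whose crossing set $\{\lambda\in I:\ker D_\lambda\neq 0\}$ consists of finitely many interior points $\lambda_1<\dots<\lambda_N$, at each of which the Robbin-Salamon crossing form $\Gamma_{\lambda_j}$ on the finite-dimensional space $\ker D_{\lambda_j}$ is non-degenerate.

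Second, I would use additivity. Pick $\delta>0$ so that the subintervals $I_j=[\lambda_j-\delta,\lambda_j+\delta]$ are disjoint and $D_\lambda$ is invertible for $\lambda\notin\bigsqcup_j I_j$. The spectral flow is additive under concatenation, giving $\sfl(D)=\sum_j\sfl(D|_{I_j})$. The analogous decomposition of $c_1(\sind(D))$ follows from the definition via the suspension $\overline{D}_{(\lambda,s)}=D_\lambda+is\,u$: this morphism is invertible off the finite set $\{(\lambda_j,0)\}\subset I\times\mathbb{R}$, and by choosing a common transversal $V\subset H$ in Lemma~\ref{transversal}, the finite-dimensional subbundle $E(\overline{D},V)$ splits as a direct sum of pieces localised near each $(\lambda_j,0)$ plus an invertible summand (which contributes $0$ in $K$-theory by property (i)); the Chern number is additive under direct sums.

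Third, I would localise each summand. Near $\lambda_j$ the spectral projection $P_\lambda$ of $D_\lambda$ onto a small spectral window around $0$ is continuous and of constant rank equal to $\dim\ker D_{\lambda_j}$. A selfadjoint compact perturbation, controlled by property (v), identifies $D|_{I_j}$ up to an invertible direct summand with the finite-dimensional model $\tilde{D}_\lambda=(\lambda-\lambda_j)A_j$ on $\ker D_{\lambda_j}$, where $A_j$ is selfadjoint and invertible with signature equal to $\sgn\Gamma_{\lambda_j}$. Diagonalising $A_j$ into its $\pm$-eigenspaces and invoking property (iii), the claim reduces to the two rank-one cases $D_\lambda=\pm\lambda$ on $\mathbb{C}$ with $I=[-1,1]$, for which $\sfl(D)=\pm 1$ by inspection and the suspension $\overline{D}_{(\lambda,s)}=\pm\lambda+is$ is a Hermitian line bundle endomorphism on $I\times\mathbb{R}$ with a single zero of winding number $\pm1$, generating $K^{-1}(I,\partial I)\cong\mathbb{Z}$ under $c_1$ by the definition of the Chern number in \eqref{Chern}.

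The main obstacle I anticipate is the finite-dimensional reduction compatible with the relative $K$-theoretic definition of $\sind$. Concretely, one must produce an explicit homotopy from $D|_{I_j}$ to $\tilde D_j\oplus(\text{invertible})$ through paths that remain invertible at $\partial I_j$, and check that the perturbation used is selfadjoint and restricts to a compact operator $W\to H$ so that property (v) applies. This is the operator-theoretic incarnation of the crossing-form formalism, and is the step most likely to require genuine care.
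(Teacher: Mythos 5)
Your plan and the paper's proof both ultimately reduce to the same rank-one normalization calculation, but they package the reduction in genuinely different ways. The paper proves the proposition by invoking Lesch's uniqueness theorem (Theorem \ref{Lesch}), which characterizes the spectral flow axiomatically on $\Omega(\mathcal{FS}(W,H),\mathcal{GS}(W,H))$; the proof then only has to verify that $\mu(D):=c_1(\sind(D))$ satisfies the four axioms. The first three (homotopy invariance, additivity under direct sum, vanishing on constant paths) follow at once from properties iv), iii) and i) of $\sind$, and the fourth (the normalization in the rank-one model $D_\lambda=\lambda P+(I_H-P)T(I_H-P)$) is verified by a direct computation: $\im(P)$ and its orthogonal complement are invariant under the suspension $\overline{D}$, $\im(P)$ is transversal to $\im(\overline{D})$, and on the line $\im(P)\cong\mathbb{C}$ the suspension acts as $z\mapsto z$, giving $c_1=1$. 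Your approach instead recreates the reduction by hand: a Robbin-Salamon generic perturbation to finitely many non-degenerate crossings, concatenation and direct-sum additivity, and a spectral-projection localization to a finite-dimensional model near each crossing, ending in the same rank-one computation. That is a valid strategy, but it requires carrying out the Robbin-Salamon crossing-form machinery inside the index-bundle category; in particular one must produce a homotopy through $\mathcal{FS}(W,H)$, with invertible endpoints, that splits the path near each crossing into a finite-dimensional piece and an invertible complement, and one must check that the perturbations used restrict to compact operators $W\to H$ so that property v) of $\sind$ applies. You correctly flag this as the delicate step; it is genuinely nontrivial, and is essentially the content that Lesch's theorem encapsulates. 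The paper's route is therefore shorter and avoids the localization entirely, while your route buys independence from Lesch's result at the cost of effectively reproving the portion of it that the reduction encodes.
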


\begin{proof}
We set $\mu(D):=c_1(\sind(D))\in\mathbb{Z}$ and show that $\mu(D)=\sfl(D)$ by using Theorem \ref{Lesch}. Note that $\mu$ satisfies the first three assumptions in Theorem \ref{Lesch} by i),iii) and iv) above.\\
Let now $P$ be a rank one orthogonal projection such that $\ker(P)\subset W$ and $T$ an operator as in iv) of Theorem \ref{Lesch}. We consider the path 
\[D_\lambda=\lambda P+(I_H-P)T(I_H-P), \quad\lambda\in[-1,1],\] of selfadjoint Fredholm operators and we have to show that $\mu(D)=1$. $D_\lambda$ is not invertible only if $\lambda=0$ having kernel $V:=\im(P)$. Since $H=\ker( D_0)\oplus\im(D_0)$, we conclude that $V$ is transversal to the image of the family of operators $\overline{D}_z=D_\lambda+is\,I_H$, $z=(\lambda,s)\in [-1,1]\times\mathbb{R}$, in the sense of  \eqref{transversality}. Moreover, $V$ and $V^\perp=\im(I_H-P)$ are both invariant subspaces under the operators $\overline{D}_z$. Denoting by $\overline{D}\mid_V$ the restriction of the family $\overline{D}$ to $V$, it follows from the definition of the index bundle that

\[\sind(D)=\ind(\overline{D})=[\Theta(V),\Theta(V),\overline{D}\mid_V]\in K^{-1}(X,Y).\]
Since $\dim V=1$, we can identify $V$ with $\mathbb{C}$ and obtain

\[\sind(D)=[\Theta(\mathbb{C}),\Theta(\mathbb{C}),a]\in K^{-1}(I,\partial I),\] 
where $a(z)=z$ for $z=\lambda+is\in[-1,1]\times i\,\mathbb{R}$. Applying \eqref{Chern} yields $c_1(\sind(D))=1$.
\end{proof}


\subsubsection{An abstract index theorem}\label{section:reduction}

Let $W,E$ and $H$ be infinite dimensional complex Hilbert spaces such that $W\subset E\subset H$ as sets. We assume that the inclusion $W\hookrightarrow E$ is continuous, $E\hookrightarrow H$ is compact and that $W$ is dense in $H$. We define an injective bounded linear operator $K:H\rightarrow E$ by requiring that

\begin{align}\label{scalarK}
\langle u,v\rangle_H=\langle Ku,v\rangle_E,\quad u\in H,\,v\in E,
\end{align}
and let $K_0$ be the restriction of $K$ to $E$. Then $K_0$ is the Riesz representation of the scalar product of $H$ as a bounded bilinear form on $E$, i.e, 

\begin{align}\label{scalarK0}
\langle u,v\rangle_H=\langle K_0u,v\rangle_E,\quad u,v\in E,
\end{align}
and it is a nonnegative compact operator. In what follows, we require that the spaces $W$, $E$ and $H$ satisfy the compatibility condition 

\begin{align}\label{red:comp}
W\subset\im(K).
\end{align}
As in the previous section, let $X$ be a compact space, $Y\subset X$ a closed subspace and  $D:X\rightarrow\mathcal{FS}(W,H)$ a continuous family of selfadjoint Fredholm operators such that $D_\lambda$ is invertible for all $\lambda\in Y$. We suppose that there exists $C>0$ such that

\begin{align}\label{inequalityA}
\langle D_\lambda u,v\rangle_H\leq\, C\|u\|_E\|v\|_E,\quad u,v\in W,\,\,\lambda\in X.
\end{align} 
Then the quadratic forms $a_\lambda(u,v)=\langle D_\lambda u,v\rangle_H$ extend uniquely to bounded quadratic forms $\tilde{a}_\lambda$ on $E$, whose Riesz representations are the bounded selfadjoint operators $B_\lambda:E\rightarrow E$ determined by

\begin{align}\label{transform}
\tilde{a}_\lambda(u,v)=\langle B_\lambda u,v\rangle_E,\quad u,v\in E.
\end{align}
We assume that the $B_\lambda$, $\lambda\in X$, are Fredholm operators which satisfy

\begin{align}\label{kernelcond}
\ker( B_\lambda+is K_0)\subset W,\quad (\lambda,s)\in X\times\mathbb{R}.
\end{align}
The following theorem is our main result on the index bundle, which we will prove below in Section \ref{proof:reduction}.

\begin{theorem}\label{theorem:reduction}
Let the spaces $W$, $E$ and $H$ satisfy the condition \eqref{red:comp} and assume that the operators $B=\{B_\lambda\}_{\lambda\in X}$, introduced in \eqref{transform} define a continuous family of Fredholm operators on $E$ such that \eqref{kernelcond} holds. Then
\[\sind(D)=\sind(B)\in K^{-1}(X,Y).\]
\end{theorem}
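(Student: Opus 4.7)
The plan is to pass through a twisted bounded suspension of $B$ that mediates between $\overline D$ and $\overline B$, using the compact operator $K:H\to E$ from \eqref{scalarK} as the geometric link. The key algebraic identity is $B_\lambda=K\circ D_\lambda$ on the dense subspace $W\subset E$: this follows at once from \eqref{transform} combined with \eqref{scalarK}, since for $u,v\in W$
\[
\langle B_\lambda u,v\rangle_E\;=\;\langle D_\lambda u,v\rangle_H\;=\;\langle KD_\lambda u,v\rangle_E .
\]
Introducing the family $\widetilde B_z:=B_\lambda+isK_0$ on $E$, indexed by $z=(\lambda,s)\in X\times\mathbb{R}$, the identity reads $\widetilde B_z\vert_W=K\overline D_z$.

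I then verify that $\widetilde B$ is a Fredholm morphism $\Theta(E)\to\Theta(E)$, invertible on $(Y\times\mathbb{R})\cup(X\times(\mathbb{R}\setminus\{0\}))$ and thus in $\mathcal{F}_{0,c}$ over $X\times\mathbb{R}$. Positivity of $K_0$ is immediate from \eqref{scalarK0}, and combined with selfadjointness of $B_\lambda$ and hypothesis \eqref{kernelcond} it forces $\ker\widetilde B_z\subset W$; on $W$ the identity above together with injectivity of $K$ yields $\ker\widetilde B_z=\ker\overline D_z$, while the same reasoning applied to $\widetilde B_z^{\ast}=B_\lambda-isK_0$ identifies cokernels via $K$, so $\widetilde B_z$ is Fredholm of index zero.

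Next I compare finite-dimensional reductions to deduce $\ind(\overline D)=\ind(\widetilde B)$ in $K(X\times\mathbb{R},Y\times\mathbb{R})\cong K^{-1}(X,Y)$. By Lemma \ref{transversal} applied to $\overline D\in\mathcal{F}_{0,c}(\Theta(W),\Theta(H))$ there is a finite-dimensional $V\subset H$ with $V+\im\overline D_z=H$ for all $z$, and density of $W$ in $H$ allows $V\subset W$. Setting $V':=KV\subset E$, closedness of the finite-codimensional sum $\im\widetilde B_z+V'$ combined with the density of $\im K$ in $E$ (itself a consequence of \eqref{scalarK}) gives $V'+\im\widetilde B_z=E$. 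The bundles $E(\overline D,V)$ and $E(\widetilde B,V')$ coincide under the inclusion $W\hookrightarrow E$: the identity $\widetilde B_z\vert_W=K\overline D_z$ together with injectivity of $K$ shows that the conditions $\overline D_z u\in V$ and $\widetilde B_z u\in V'$ are equivalent for $u\in W$, and under the isomorphism $K\vert_V:V\xrightarrow{\sim}V'$ the restricted bundle morphisms agree. The remaining identification $\ind(\widetilde B)=\ind(\overline B)=\sind(B)$ is then obtained from the straight-line homotopy $h_t:=B_\lambda+is\bigl((1-t)I_E+tK_0\bigr)$: the positivity of $(1-t)I_E+tK_0$ for $t\in[0,1]$ lets the selfadjointness argument of the previous paragraph go through uniformly in $t$, keeping $h_t$ in $\mathcal{F}_{0,c}$ and invertible on $Y\times\mathbb{R}$.

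The main obstacle is the regularity claim needed for the identification $E(\widetilde B,V')=E(\overline D,V)$: given $u\in E$ with $\widetilde B_z u\in V'$, one must conclude $u\in W$. When $s\neq 0$ this follows by solving $\overline D_z c=v$ in $W$ (using that $\overline D_z:W\to H$ is bijective for $s\neq 0$ by selfadjointness) and applying \eqref{kernelcond} to $u-c$; the same trick works at $s=0$ for $\lambda\in Y$. At the degenerate locus $(X\setminus Y)\times\{0\}$, however, $\overline D_z$ is not surjective, and one must bootstrap: rewrite $\widetilde B_z u=Kv$ as the kernel equation of a finite-rank augmentation of $\widetilde B$ (say $B_\lambda+isK_0-K\circ\pi_V$ for a projection $\pi_V$ onto $V$), and then re-invoke \eqref{kernelcond} together with the compatibility \eqref{red:comp} to conclude $u\in W$.
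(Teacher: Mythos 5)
Your plan is structurally identical to the paper's: the same auxiliary family $\widetilde B_{(\lambda,s)}=B_\lambda+isK_0$, the same commutation $\widetilde B\vert_W=K\overline D$, the same straight-line homotopy interpolating $K_0$ and $I_E$, and the same comparison of finite-dimensional reductions. Steps 1 and 2 of your argument match the paper essentially line by line, and your transversality discussion in Step 3 (density of $\im K$ in $E$ plus closedness of $\im\widetilde B_z + V'$) is a perfectly valid alternative to the paper's explicit choice of $V$.

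The genuine gap is the one you flag yourself: the claim that $u\in E$ with $\widetilde B_zu\in V'$ must lie in $W$. Your proposed repair at the degenerate locus $(X\setminus Y)\times\{0\}$ --- re-invoking \eqref{kernelcond} for the finite-rank augmentation $B_\lambda+isK_0-K\circ\pi_V$ --- does not follow from the hypotheses: \eqref{kernelcond} is assumed only for the specific operators $B_\lambda+isK_0$, and nothing in the setup lets you transfer it to an augmented family. Worse, the augmentation you write down does not actually encode the condition $\widetilde B_zu\in V'$ (that condition is $\widetilde B_zu=Kv$ for \emph{some} $v\in V$, not for $v=\pi_Vu$), so even granting a strengthened kernel hypothesis the reduction would not go through as stated.

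The point is that no regularity is needed at all. Observe that \eqref{dimE} gives
\[
\dim E(\overline D,V)=\dim V=\dim KV=\dim E(\widetilde B,V'),
\]
the middle equality by injectivity of $K$. The inclusion $\iota:W\hookrightarrow E$ restricts (by the commuting square $\widetilde B\vert_W=K\overline D$) to a fibrewise injective bundle morphism $E(\overline D,V)\to E(\widetilde B,V')$ between bundles of the same finite rank, and is therefore automatically a bundle isomorphism; together with the isomorphism $K\vert_V:\Theta(V)\to\Theta(V')$ this identifies the two $K$-theory classes without ever having to decide whether an arbitrary element of $E(\widetilde B,V')_z$ lies in $W$. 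This dimension count is exactly how the paper closes the argument, and you should replace your bootstrap at the degenerate locus by it.
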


In the rest of this section we illustrate Theorem \ref{theorem:reduction} with an example. We consider the spaces $H=L^2(I,\mathbb{C}^n)$, $E=H^1_0(I,\mathbb{C}^n)$ and $W=H^2(I,\mathbb{C}^n)\cap H^1_0(I,\mathbb{C}^n)$, where $I=[0,1]$ denotes the unit interval (cf. \cite[\S 2.7]{Masiello}). For a real symmetric matrix $\mathcal{J}$ and a continuous two parameter family of such matrices $S:[a,b]\times I\rightarrow M(n,\mathbb{R})$, we examine the bilinear forms

\begin{align}\label{Hessians}
q_\lambda:E\times E\rightarrow\mathbb{C},\quad 
q_\lambda(u,v)=\int^1_0{\langle\mathcal{J}u'(t),v'(t)\rangle dt}-\int^1_0{\langle S_\lambda(t)u(t),v(t)\rangle dt},
\end{align}
and the operators

\begin{align}\label{JacOps}
D_\lambda:W\rightarrow H,\quad D_\lambda u=\mathcal{J}u''+S_\lambda(\cdot) u,\quad\lambda\in[a,b].
\end{align} 
We see at once that $D$ is a continuous path in $\mathcal{FS}(W,H)$ and so the spectral flow $\sfl(D)$ is defined. Moreover, $q$ is a continuous family of bounded bilinear forms and the associated Riesz representations $B_\lambda$, $\lambda\in[a,b]$, are Fredholm, as is easy to check by using the compactness of the embedding $E\hookrightarrow H$ (cf. \cite[Prop. 3.1]{Pejsachowicz}). Applying integration by parts gives

\[-\langle D_\lambda u,v\rangle_H=q_\lambda (u,v),\quad u\in W,\, v\in E,\,\,\,\lambda\in [a,b],\]
and hence $-D$ and $B$ are related as in \eqref{transform}. In order to apply Theorem \ref{theorem:reduction}, we need to show that the assumptions \eqref{red:comp} and \eqref{kernelcond} hold.\\
From the definition of the scalar product

\[\langle u,v\rangle_E=\int^1_0{\langle u'(t),v'(t)\rangle\,dt},\quad u,v\in E,\]
it may be concluded that the operator

\[K:H\rightarrow E,\quad (Ku)(t)=-\int^t_0{\int^s_0{u(\tau)\,d\tau}ds}+t\int^1_0{\int^s_0{u(\tau)\,d\tau}ds}\]
satisfies $\langle u,v\rangle_H=\langle Ku,v\rangle_E$ for all $u\in H$ and $v\in E$. Given $u\in W$, the second derivative $u''$ belongs to $H$ and from $u(0)=u(1)=0$ it follows that $Ku''=-u$. Consequently, the spaces $W$, $E$ and $H$ fulfil the compatibility condition \eqref{red:comp}. Finally, if we denote by $K_0$ the restriction of $K$ to $H^1_0(I,\mathbb{C}^n)$, then the kernels $\ker(B_\lambda+is\,K_0)$, $(\lambda,s)\in X\times\mathbb{R}$, in \eqref{kernelcond} consist of smooth functions by elliptic regularity. Accordingly, they are contained in $W$ which shows \eqref{kernelcond}. We obtain from Theorem \ref{theorem:reduction} and Proposition \ref{prop:sfl}:

\begin{prop}\label{prop:reduction}
If $D_a$ and $D_b$ are invertible, then

\[\sfl(D)=-\sfl(B).\]
\end{prop}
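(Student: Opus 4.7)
The plan is to combine Theorem \ref{theorem:reduction} with Proposition \ref{prop:sfl} and the elementary sign-reversal of the spectral flow. All the heavy lifting has already been arranged in the excerpt; what remains is to check a couple of hypotheses and then assemble the pieces.

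The excerpt has already verified that the spaces $W,E,H$ satisfy the compatibility condition \eqref{red:comp}, that $B_\lambda$ is a continuous family of Fredholm operators on $E$, and that \eqref{kernelcond} holds by elliptic regularity. Since $S$ is continuous on the compact set $[a,b]\times I$, the bilinear forms $q_\lambda$ are uniformly bounded on $E\times E$, so \eqref{inequalityA} is satisfied with $-D_\lambda$ in place of $D_\lambda$. The identity $\langle -D_\lambda u,v\rangle_H = q_\lambda(u,v) = \langle B_\lambda u,v\rangle_E$ from the excerpt exhibits $-D$ and $B$ precisely in the form \eqref{transform}, so the pair $(-D,B)$ fits the framework of Theorem \ref{theorem:reduction}.

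It remains to check invertibility at the endpoints. Now $-D_a,-D_b$ are invertible by hypothesis, and for $B_a,B_b$ a short kernel comparison suffices: if $B_\lambda u=0$, then \eqref{kernelcond} with $s=0$ forces $u\in W$, and for any $v\in W$ one then has $\langle D_\lambda u,v\rangle_H = -q_\lambda(u,v)=0$; density of $W$ in $H$ yields $D_\lambda u=0$. Hence $\ker B_\lambda\subset \ker D_\lambda$, so invertibility of $D_\lambda$ at $\lambda=a,b$ forces injectivity of $B_\lambda$ there, and selfadjointness of $B_\lambda$ on $E$ upgrades injectivity to invertibility.

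Once the hypotheses are in place, Theorem \ref{theorem:reduction} gives $\sind(-D)=\sind(B)\in K^{-1}([a,b],\{a,b\})$. Applying Proposition \ref{prop:sfl} on both sides, and combining with the standard identity $\sfl(-D)=-\sfl(D)$ (immediate from the signed-crossing description of the spectral flow recalled in Appendix B), we obtain
\[
-\sfl(D)\;=\;\sfl(-D)\;=\;c_1(\sind(-D))\;=\;c_1(\sind(B))\;=\;\sfl(B),
\]
which is the desired equality. The only genuinely non-trivial step is the invocation of Theorem \ref{theorem:reduction}; the remaining manipulations are bookkeeping, and I anticipate no real obstacle.
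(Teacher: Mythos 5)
Your proposal is correct and follows the paper's own route: verify the hypotheses of Theorem \ref{theorem:reduction} for the pair $(-D,B)$ (compatibility via the explicit $K$, the kernel condition via regularity, boundedness of the forms), conclude $\sind(-D)=\sind(B)$, and translate into spectral flows via Proposition \ref{prop:sfl} together with $\sfl(-D)=-\sfl(D)$. Your extra check that $\ker B_\lambda\subset\ker D_\lambda$ at the endpoints is a harmless (and in fact welcome) elaboration of a point the paper leaves implicit.
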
 

The Morse index theorem for geodesics in Riemannian manifolds states the equality of the Morse index of a geodesic and the number of conjugate points along the geodesic counted with multiplicities. For semi-Riemannian manifolds, conjugate points along a geodesic $\gamma$ may accumulate and also the Morse index is no longer finite in general. A generalisation of the Morse index theorem to this setting was obtained by Musso, Pejsachowicz and Portaluri in \cite{Pejsachowicz} as an equality of three integral-valued indices $\mu_{Morse}(\gamma)$, $\mu_{con}(\gamma)$ and $\mu_{spec}(\gamma)$. $\mu_{Morse}(\gamma)$ generalises the Morse index to semi-Riemannian geodesics by means of the spectral flow of a path of bounded selfadjoint Fredholm operators $B$, which are induced by bilinear forms $q$ of the form \eqref{Hessians}. Here $\mathcal{J}$ is a diagonal matrix having $1$ and $-1$ on the diagonal according to the signature of the metric of the underlying manifold, and $S$ contains curvature terms of the manifold along the geodesic. The counting of conjugate points along a geodesic is replaced by $\mu_{con}(\gamma)$, which is defined as a winding number of a planar vector field associated to the Jacobi equation of the geodesic $\gamma$. Finally, in the proof of the theorem appears the third index $\mu_{spec}(\gamma)$, which is the spectral flow of a path of unbounded selfadjoint Fredholm operators as in \eqref{JacOps}. The proof of the semi-Riemannian Morse index theorem in \cite{Pejsachowicz} falls naturally into two parts. The first part shows $\mu_{Morse}(\gamma)=\mu_{spec}(\gamma)$ by using an analytic perturbation argument established by Robbin and Salamon in \cite{Robbin-Salamon} in their construction of the spectral flow. The equality $\mu_{spec}(\gamma)=\mu_{Morse}(\gamma)$, obtained in \cite{Pejsachowicz} by functional analytic arguments, was later proved by the author in \cite{MorseIch} by using the index bundle for families of selfadjoint Fredholm operators and $K$-theory. However, $\mu_{Morse}(\gamma)=\mu_{spec}(\gamma)$ was again only shown by the analytic perturbation argument from \cite{Robbin-Salamon}.  Now the equality $\mu_{Morse}(\gamma)=\mu_{spec}(\gamma)$ follows from Proposition \ref{prop:reduction} and along with \cite{MorseIch} a complete $K$-theoretic proof of the semi-Riemannian Morse index theorem \cite{Pejsachowicz} is established.\\ 
Finally, let us point out that there are several other settings in which Theorem \ref{theorem:reduction} can be applied. We want to mention in particular that, besides periodic Hamiltonian systems which we are studying in this article, also wave equations and noncooperative elliptic systems as considered in \cite{Costa} provide interesting examples.


\section{The index theorem for Hamiltonian systems}\label{section:indthm}
The aim of this section is to state the index theorem for periodic Hamiltonian systems. Throughout we let $X$ be a compact topological space and $Y\subset X$ a closed subspace.\\
At first, we recall the definition of the function spaces that will be used below. The common Hilbert space $L^2(S^1,\mathbb{R}^{2n})$ consists of all functions $u:[0,2\pi]\rightarrow\mathbb{R}^{2n}$ such that

\begin{align}\label{Fourier}
u(t)=c_0+\sum^\infty_{k=1}{a_k\sin\,kt+b_k\cos\,kt},
\end{align}
where $c_0,a_k,b_k\in\mathbb{R}^{2n}$, $k\in\mathbb{N}$, and 

\[\sum^\infty_{k=1}{|a_k|^2+|b_k|^2}<\infty.\]
The scalar product on $L^2(S^1,\mathbb{R}^{2n})$ is given by

\[\langle u,v\rangle_{L^2}=2\pi\langle c_0,\tilde{c}_0\rangle+\pi\sum^\infty_{k=1}{\langle a_k,\tilde{a}_k\rangle+\langle b_k,\tilde{b}_k\rangle},\]
where $\tilde{c}_0$ and $\tilde{a}_k,\tilde{b}_k$ denote the Fourier coefficients of $v\in L^2(S^1,\mathbb{R}^{2n})$. The subset $H^\frac{1}{2}(S^1,\mathbb{R}^{2n})$ of all functions $u\in L^2(S^1,\mathbb{R}^{2n})$ such that

\begin{align}\label{condE}
\sum^\infty_{k=1}{k(|a_k|^2+|b_k|^2)}<\infty
\end{align}
is a Hilbert space in its own right with respect to the scalar product

\begin{align}\label{scalarprodE}
\langle u,v\rangle_{H^\frac{1}{2}}=2\pi\langle c_0,\tilde{c}_0\rangle+\pi\sum^\infty_{k=1}{k(\langle a_k,\tilde{a}_k\rangle+\langle b_k,\tilde{b}_k\rangle)}.
\end{align}
Finally, $H^1(S^1,\mathbb{R}^{2n})$ is defined similarly than $H^\frac{1}{2}(S^1,\mathbb{R}^{2n})$ just by replacing $k$ by $k^2$ in \eqref{condE} and \eqref{scalarprodE}. We will use henceforth that we have compact inclusions \[ H^\frac{1}{2}(S^1,\mathbb{R}^{2n})\hookrightarrow L^2(S^1,\mathbb{R}^{2n})\quad\text{and}\quad H^1(S^1,\mathbb{R}^{2n})\hookrightarrow L^2(S^1,\mathbb{R}^{2n}).\] 
The Hilbert space $H^\frac{1}{2}(S^1,\mathbb{R}^{2n})$ has an orthogonal decomposition $H^\frac{1}{2}(S^1,\mathbb{R}^{2n})=E_+\oplus E_0\oplus E_-$, where

\begin{align*}
E_0&=\left\{u\in H^\frac{1}{2}(S^1,\mathbb{R}^{2n}):\,u\equiv c_0,\,c_0\in\mathbb{R}^{2n}\right\}\\
E_\pm&=\left\{u\in H^\frac{1}{2}(S^1,\mathbb{R}^{2n}):\, u(t)=\sum^\infty_{k=1}{a_k\cos\,kt\mp Ja_k\sin\,kt},\quad a_k\in\mathbb{R}^{2n}\right\}.
\end{align*}
Let $P_\pm$ denote the orthogonal projections in $H^\frac{1}{2}(S^1,\mathbb{R}^{2n})$ onto $E_\pm$ and define a  bilinear form by

\begin{align}\label{Gamma}
\Gamma:H^\frac{1}{2}(S^1,\mathbb{R}^{2n})\times H^\frac{1}{2}(S^1,\mathbb{R}^{2n})\rightarrow\mathbb{R},\quad \Gamma(u,v)=\langle P_+u-P_-u,v\rangle_{H^\frac{1}{2}}.
\end{align}
For $u\in H^1(S^1,\mathbb{R}^{2n})$, we see from the definition of $E_\pm$ and $\langle\cdot,\cdot\rangle_{H^\frac{1}{2}}$ that we can rewrite $\Gamma$ as

\begin{align}\label{Lformula1}
\Gamma(u,v)=\int^{2\pi}_0{\langle Ju',v\rangle\,dt},\quad v\in H^\frac{1}{2}(S^1,\mathbb{R}^{2n}).
\end{align}
Now consider the family of functionals

\begin{align*}
\psi:X\times H^\frac{1}{2}(S^1,\mathbb{R}^{2n})\rightarrow\mathbb{R},\quad\psi(\lambda,u)=\frac{1}{2}\,\Gamma(u,u)+\int^{2\pi}_0{\mathcal{H}(\lambda,t,u(t))\,dt}.
\end{align*}
It is well known (cf. \cite[Prop. 2.1]{Bartsch}) that under the assumptions \eqref{growthcond} each $\psi_\lambda$ is $C^2$ and 

\[\nabla_u\psi(\lambda,u)=(P_+-P_-)u+G(\lambda,u),\]
where

\[\langle G(\lambda,u),v\rangle_{H^\frac{1}{2}}=\int^{2\pi}_0{\langle\nabla_u\mathcal{H}(\lambda,t,u(t)),v(t)\rangle\,dt},\quad u,v\in H^\frac{1}{2}(S^1,\mathbb{R}^{2n}).\]
Consequently, the critical points of $\psi_\lambda$ are precisely the weak solutions of the Hamiltonian system \eqref{equation}. Let us denote by $L_\lambda$, $\lambda \in X$, the linearisation of $\nabla_u\psi(\lambda,u)$ with respect to $u$ at the branch of trivial critical points $X\times\{0\}\subset X\times H^\frac{1}{2}(S^1,\mathbb{R}^{2n})$. Since $\nabla_u\mathcal{H}(\lambda,t,0)=0$, it may be concluded that these operators are given by

\begin{align}\label{Lformula2}
\langle L_\lambda u,v\rangle_{H^\frac{1}{2}}=\Gamma(u,v)+\int^{2\pi}_0{\langle S_\lambda(t)u(t),v(t)\rangle\, dt},\quad u,v\in H^\frac{1}{2}(S^1,\mathbb{R}^{2n}),
\end{align} 
where $S_\lambda(t)=D_u\nabla_u\mathcal{H}(\lambda,t,0)$, $(\lambda,t)\in X\times\mathbb{R}$, as already introduced in \eqref{Smatrix} above. Henceforth, we call the Hamiltonian system \eqref{equation} \textit{admissible} if the operators $L_\lambda$ are invertible for all $\lambda\in Y$.

\begin{lemma}\label{comppert}
The operators $L_\lambda$, $\lambda\in X$, are of the form $L_\lambda=A+K_\lambda$, where $A$ is a selfadjoint Fredholm operator and $K$ is a continuous family of compact operators.  
\end{lemma}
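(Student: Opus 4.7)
The plan is to decompose $L_\lambda$ according to the two summands appearing in \eqref{Lformula2}: take $A$ to be the Riesz representative of the bilinear form $\Gamma$ on $H^{\frac{1}{2}}(S^1,\mathbb{R}^{2n})$, and $K_\lambda$ to be the Riesz representative of the bilinear form $b_\lambda(u,v):=\int_0^{2\pi}\langle S_\lambda(t)u(t),v(t)\rangle\,dt$.

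First I would deal with $A$. By definition of $\Gamma$ in \eqref{Gamma} the operator is simply $A=P_+-P_-$, which acts as $+I$, $0$, $-I$ on the three summands of the orthogonal decomposition $H^{\frac{1}{2}}=E_+\oplus E_0\oplus E_-$. Thus $A$ is bounded and selfadjoint, its kernel is the finite-dimensional space $E_0$ (of real dimension $2n$), and its range is the closed subspace $E_+\oplus E_-$. Hence $A\in\mathcal{FS}(H^{\frac{1}{2}},H^{\frac{1}{2}})$ with Fredholm index zero.

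Next I would treat $K_\lambda$. The form $b_\lambda$ is continuous on $H^{\frac{1}{2}}\times H^{\frac{1}{2}}$ because the multiplication operator $M_\lambda:L^2(S^1,\mathbb{R}^{2n})\to L^2(S^1,\mathbb{R}^{2n})$, $u\mapsto S_\lambda(\cdot)u$, is bounded (with norm at most $\sup_{t}\|S_\lambda(t)\|$) and the inclusion $\iota:H^{\frac{1}{2}}(S^1,\mathbb{R}^{2n})\hookrightarrow L^2(S^1,\mathbb{R}^{2n})$ is continuous. Its Riesz representation satisfies $K_\lambda=\iota^{\ast}M_\lambda\iota$, where $\iota^{\ast}:L^2\to H^{\frac{1}{2}}$ is the Hilbert-space adjoint of $\iota$. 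Symmetry of the Hessian matrix $S_\lambda(t)$ makes $b_\lambda$ symmetric, so $K_\lambda$ is selfadjoint. Because $\iota$ is compact (as recalled above \eqref{condE}), so is $\iota^{\ast}$, and therefore $K_\lambda$ is compact.

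Finally, I would check continuity of the assignment $\lambda\mapsto K_\lambda$. The hypothesis that all partial derivatives of $\mathcal{H}_\lambda$ depend continuously on $\lambda\in X$ implies that $S:X\to C([0,2\pi],\mathrm{Sym}(2n,\mathbb{R}))$ is continuous, hence $\lambda\mapsto M_\lambda$ is continuous in the operator norm of $\mathcal{L}(L^2,L^2)$, and composing with the fixed compact operators $\iota$ and $\iota^{\ast}$ yields norm-continuity of $\lambda\mapsto K_\lambda$ in $\mathcal{L}(H^{\frac{1}{2}},H^{\frac{1}{2}})$. Combining these steps with the identity \eqref{Lformula2} gives $\langle L_\lambda u,v\rangle_{H^{\frac{1}{2}}}=\langle Au,v\rangle_{H^{\frac{1}{2}}}+\langle K_\lambda u,v\rangle_{H^{\frac{1}{2}}}$, so $L_\lambda=A+K_\lambda$ as required. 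There is no real obstacle here; the only mild technical point is the identification $K_\lambda=\iota^{\ast}M_\lambda\iota$, which is what pulls compactness out of the compactness of the Sobolev embedding.
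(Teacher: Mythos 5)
Your proposal is correct and follows essentially the same route as the paper: take $A=P_+-P_-$ and let $K_\lambda$ be the Riesz representative of the form $\int_0^{2\pi}\langle S_\lambda(t)u(t),v(t)\rangle\,dt$, with compactness coming from the compact embedding $H^{\frac{1}{2}}(S^1,\mathbb{R}^{2n})\hookrightarrow L^2(S^1,\mathbb{R}^{2n})$. The explicit factorisation $K_\lambda=\iota^{\ast}M_\lambda\iota$ and the continuity check are just slightly more detailed versions of what the paper states.
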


\begin{proof}
At first, we set $A:=P_+-P_-$, which is a selfadjoint Fredholm operator on $H^\frac{1}{2}(S^1,\mathbb{R}^{2n})$. The maps 

\[\beta_\lambda:L^2(S^1,\mathbb{R}^{2n})\times L^2(S^1,\mathbb{R}^{2n})\rightarrow\mathbb{R},\quad (u,v)\mapsto\int^{2\pi}_0{\langle S_\lambda(t)u(t),v(t)\rangle\, dt}\]
restrict to a continuous family of bounded bilinear forms on $H^\frac{1}{2}(S^1,\mathbb{R}^{2n})$. Now

\[\langle K_\lambda u,v\rangle_{H^\frac{1}{2}}=\beta_\lambda(u,v),\quad u,v\in H^\frac{1}{2}(S^1,\mathbb{R}^{2n}),\]
defines a continuous family of bounded selfadjoint operators $K_\lambda$ on $H^\frac{1}{2}(S^1,\mathbb{R}^{2n})$ such that $L_\lambda=A+K_\lambda$, $\lambda\in X$. It remains to show the compactness of $K_\lambda$, $\lambda\in X$. Let $\{u_n\}_{n\in\mathbb{N}}$ and $\{v_n\}_{n\in\mathbb{N}}$ be sequences in $H^\frac{1}{2}(S^1,\mathbb{R}^{2n})$ which converge weakly to some elements $u,v\in H^\frac{1}{2}(S^1,\mathbb{R}^{2n})$. From the compactness of the inclusion $H^\frac{1}{2}(S^1,\mathbb{R}^{2n})\hookrightarrow L^2(S^1,\mathbb{R}^{2n})$, it follows that they converge strongly in $L^2(S^1,\mathbb{R}^{2n})$. Consequently, $\langle K_\lambda u_n,v_n\rangle_{H^\frac{1}{2}}=\beta_\lambda(u_n,v_n)$ converges to $\beta_\lambda(u,v)=\langle K_\lambda u,v\rangle_{H^\frac{1}{2}}$, which shows the compactness of $K_\lambda$ (cf. \cite[\S 21.10]{Zeidler}).  
\end{proof}

The previous lemma implies that $L=\{L_\lambda\}_{\lambda\in X}$ is a continuous family of bounded selfadjoint Fredholm operators in $H^\frac{1}{2}(S^1,\mathbb{R}^{2n})$. We now consider the complexifications $L^\mathbb{C}_\lambda$, of the operators $L_\lambda$, $\lambda\in X$, acting on the complexified Hilbert space $H^\frac{1}{2}(S^1,\mathbb{R}^{2n})^\mathbb{C}=H^\frac{1}{2}(S^1,\mathbb{C}^{2n})$ (cf. eg. \cite[p. 113-114]{Weidmann}). Then $L^\mathbb{C}_\lambda=A^\mathbb{C}+K^\mathbb{C}_\lambda$, $\lambda\in X$, where $A^\mathbb{C}\in\mathcal{FS}(H^\frac{1}{2}(S^1,\mathbb{C}^{2n}))$, $K^\mathbb{C}_\lambda\in\mathcal{K}(H^\frac{1}{2}(S^1,\mathbb{C}^{2n}))$ is selfadjoint, and accordingly 

\[L^\mathbb{C}:X\rightarrow\mathcal{FS}(H^\frac{1}{2}(S^1,\mathbb{C}^{2n}))\]
is a continuous family of bounded selfadjoint Fredholm operators. Of course, $L^\mathbb{C}_\lambda$ is invertible if and only if $L_\lambda$ is invertible, $\lambda\in X$.

\begin{defi}
The \textbf{generalised Morse index} of the admissible family of Hamiltonian systems \eqref{equation} is defined by

\begin{align*}
\mu_{Morse}(\mathcal{H})=\sind(L^\mathbb{C})\in K^{-1}(X,Y).
\end{align*}
\end{defi}


Let us now consider the family of differential operators

\[\mathcal{A}_\lambda:H^1(S^1,\mathbb{R}^{2n})\subset L^2(S^1,\mathbb{R}^{2n})\rightarrow L^2(S^1,\mathbb{R}^{2n}),\quad\mathcal{A}_\lambda u=Ju'+S_\lambda(\cdot)u,\quad\lambda\in X,\]
where as before $S_\lambda(t)=D_u\nabla_u\mathcal{H}(\lambda,t,0)$. The multiplication operators induced by $S$ define a continuous family in $\mathcal{S}(L^2(S^1,\mathbb{R}^{2n}))$, which restricts to a continuous family in \[\mathcal{K}(H^1(S^1,\mathbb{R}^{2n}),L^2(S^1,\mathbb{R}^{2n}))\] by the compactness of the inclusion $H^1(S^1,\mathbb{R}^{2n})\hookrightarrow L^2(S^1,\mathbb{R}^{2n})$. Consequently, 

\[\mathcal{A}_\lambda\in\mathcal{FS}(H^1(S^1,\mathbb{R}^{2n}),L^2(S^1,\mathbb{R}^{2n})),\quad\lambda\in X,\]
and $\mathcal{A}=\{\mathcal{A}_\lambda\}_{\lambda\in X}$ defines a continuous family 

\[\mathcal{A}:X\rightarrow\mathcal{FS}(H^1(S^1,\mathbb{R}^{2n}),L^2(S^1,\mathbb{R}^{2n}))\]
It follows from \eqref{Lformula1} and \eqref{Lformula2} that $\mathcal{A}_\lambda$ is invertible whenever $L_\lambda$ is invertible, $\lambda\in X$. Finally, we complexify the spaces and operators to obtain a continuous family 

\[\mathcal{A}^\mathbb{C}:X\rightarrow\mathcal{FS}(H^1(S^1,\mathbb{C}^{2n}),L^2(S^1,\mathbb{C}^{2n}))\]
of selfadjoint Fredholm operators such that $\mathcal{A}^\mathbb{C}_\lambda$ is invertible for all $\lambda\in Y$ if \eqref{equation} is admissible.

\begin{defi}
The \textbf{spectral index} of the admissible family \eqref{equation} is defined by

\begin{align*}
\mu_{spec}(\mathcal{H})=\sind(\mathcal{A}^\mathbb{C})\in K^{-1}(X,Y).
\end{align*} 
\end{defi}

\begin{rem}\label{rem:specindtriv}
Note that by Remark \ref{rem:trivial}, $\mu_{Morse}(\mathcal{H})$ and $\mu_{spec}(\mathcal{H})$ are trivial if $Y=\emptyset$.
\end{rem}


For our final $K$-theoretic index let 

\[\Psi_z:[0,2\pi]\rightarrow \GL(2n,\mathbb{C}),\quad z=(\lambda,s)\in X\times\mathbb{R},\]
be the fundamental solution of the ordinary differential equation

\begin{equation}\label{ode}
\left\{
\begin{aligned}
J\Psi'_z(t)+S_\lambda(t)\Psi_z(t)+i\,s\Psi_z(t)&=0,\quad t\in[0,2\pi],\\
\Psi_z(0)&=I_{2n}.
\end{aligned}
\right.
\end{equation}
We define a family of matrices by

\begin{align}\label{align:lambda}
\Lambda: X\times\mathbb{R}\rightarrow M(2n,\mathbb{C}),\quad \Lambda(z)=(I_{2n}-\Psi_z(2\pi))^T,
\end{align}
where $\cdot^T$ stands for the transpose of a matrix. Note that $\det\Lambda(z)=0$ if and only if $1$ is a Floquet multiplier of the system \eqref{ode}.

\begin{lemma}\label{detlemma}
For $z=(\lambda,s)\in X\times\mathbb{R}$ the following assertions are equivalent:
\begin{enumerate}
	\item $\det\Lambda(z)=0$,
	\item $s=0$ and $\ker\mathcal{A}^\mathbb{C}_\lambda\neq 0$,
	\item $s=0$ and $\ker\mathcal{A}_\lambda\neq 0$.
\end{enumerate}
\end{lemma}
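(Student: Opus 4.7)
The plan is to establish the chain $(1)\Leftrightarrow(2)\Leftrightarrow(3)$ by first identifying condition $(1)$ with the non-triviality of $\ker(\mathcal{A}^\mathbb{C}_\lambda + is\,I)$, and then using the selfadjointness of $\mathcal{A}^\mathbb{C}_\lambda$ to force $s=0$.

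First I would note that $\det\Lambda(z) = \det(I_{2n}-\Psi_z(2\pi))$ since transposition preserves determinants, so $(1)$ is equivalent to the existence of a nonzero $v\in\mathbb{C}^{2n}$ with $\Psi_z(2\pi)v = v$. Given such a $v$, the function $u(t):=\Psi_z(t)v$ is smooth and $2\pi$-periodic, and by construction satisfies $Ju'+S_\lambda u+is\,u=0$; thus $u$ is a nonzero element of $\ker(\mathcal{A}^\mathbb{C}_\lambda+is\,I)$. Conversely, any nonzero $u\in\ker(\mathcal{A}^\mathbb{C}_\lambda+is\,I)$ is smooth by elliptic regularity and equals $\Psi_z(t)u(0)$ by uniqueness for the Cauchy problem associated to \eqref{ode}; the periodicity condition $u(0)=u(2\pi)$ then yields $\Psi_z(2\pi)u(0)=u(0)$ with $u(0)\neq 0$, since $u(0)=0$ would force $u\equiv 0$. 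Hence $(1)$ is equivalent to $\ker(\mathcal{A}^\mathbb{C}_\lambda+is\,I)\neq 0$.

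Next, the selfadjointness of $\mathcal{A}^\mathbb{C}_\lambda$ on $L^2(S^1,\mathbb{C}^{2n})$ implies that any $u\neq 0$ with $\mathcal{A}^\mathbb{C}_\lambda u = -is\,u$ satisfies $\langle \mathcal{A}^\mathbb{C}_\lambda u, u\rangle_{L^2} = -is\,\|u\|^2_{L^2}$ with real left-hand side, forcing $s=0$. This yields $(1)\Rightarrow(2)$, while $(2)\Rightarrow(1)$ is immediate from the displayed equivalence specialised to $s=0$. The equivalence $(2)\Leftrightarrow(3)$ then follows at once from the fact that $\mathcal{A}^\mathbb{C}_\lambda$ is the complexification of the real operator $\mathcal{A}_\lambda$, so that $\ker\mathcal{A}^\mathbb{C}_\lambda = (\ker\mathcal{A}_\lambda)\oplus i(\ker\mathcal{A}_\lambda)$, which is trivial precisely when $\ker\mathcal{A}_\lambda$ is.

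The main point of the argument is the selfadjointness step that eliminates nonzero $s$; the remainder reduces to standard ODE theory (uniqueness for the linear Cauchy problem) together with the identification of weak $H^1$-periodic solutions with classical ones via elliptic regularity, neither of which presents an obstacle.
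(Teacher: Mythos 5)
Your proposal is correct and follows essentially the same route as the paper: represent kernel elements of $\mathcal{A}^\mathbb{C}_\lambda+is\,I$ as $\Psi_z(\cdot)u_0$ via the fundamental solution, use selfadjointness to rule out $s\neq 0$, and identify $\ker\mathcal{A}^\mathbb{C}_\lambda$ with the complexification of $\ker\mathcal{A}_\lambda$. The only cosmetic difference is that you prove the full equivalence $\det\Lambda(z)=0\Leftrightarrow\ker(\mathcal{A}^\mathbb{C}_\lambda+is\,I)\neq 0$ (invoking ODE uniqueness and regularity for the converse), whereas the paper closes the cycle by going directly from a real kernel element of $\mathcal{A}_\lambda$ back to $\det\Lambda(\lambda,0)=0$; both are sound.
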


\begin{proof}
If $\det\Lambda(z)=0$, then there exists $0\neq u_0\in\mathbb{C}^{2n}$ such that $\Psi_z(2\pi)u_0=u_0$. Hence $u(t)=\Psi_z(t)u_0$, $t\in[0,2\pi]$, is a non-trivial element of $H^1(S^1,\mathbb{C}^{2n})$ such that $\mathcal{A}^\mathbb{C}_\lambda u+is\,u=0$. From this we conclude that $s=0$ and $\ker\mathcal{A}^\mathbb{C}_\lambda\neq 0$ since $\mathcal{A}^\mathbb{C}_\lambda$ is selfadjoint and Fredholm. Of course, $\ker(\mathcal{A}^\mathbb{C})=(\ker\mathcal{A})^\mathbb{C}$ and so the second and the third assertion are equivalent. Finally, if $u\in\ker\mathcal{A}_\lambda$ and $z=(\lambda,0)$, then there exists $u_0\in\mathbb{R}^{2n}$ such that $u(t)=\Psi_z(t)u_0$, $t\in[0,2\pi]$. From $u_0=\Psi_z(0)u_0=u(0)=u(2\pi)=\Psi_z(2\pi)u_0$, we obtain $(I_{2n}-\Psi_z(2\pi))u_0=0$ and consequently $\det\Lambda(z)=0$. 
\end{proof}

Let us recall from the definition of the spectral index that $\ker\mathcal{A}_\lambda=0$ for all $\lambda\in Y$ if \eqref{equation} is admissible. Consequently, by the previous lemma, $\Lambda(z)$ is invertible whenever $z=(\lambda,s)$ belongs to $Y\times\mathbb{R}$ or is outside the compact subset $X\times\{0\}$ of $X\times\mathbb{R}$. Now we obtain immediately a relative $K$-theory class as follows:

\begin{defi}
The \textbf{monodromy index} of the admissible family \eqref{equation} is defined by

\begin{align*}
\mu_{mon}(\mathcal{H})=[\Theta(\mathbb{C}^{2n}),\Theta(\mathbb{C}^{2n}),\Lambda]\in K^{-1}(X,Y).
\end{align*}
\end{defi}

Finally, we can state our main theorem, which we prove below in Section \ref{proof:main}.

\begin{theorem}\label{main}
If \eqref{equation} is admissible, then

\begin{align*}
\mu_{Morse}(\mathcal{H})=\mu_{spec}(\mathcal{H})=\mu_{mon}(\mathcal{H})\in K^{-1}(X,Y).
\end{align*}
\end{theorem}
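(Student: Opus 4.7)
The proof naturally splits into two equalities: $\mu_{Morse}(\mathcal{H}) = \mu_{spec}(\mathcal{H})$ and $\mu_{spec}(\mathcal{H}) = \mu_{mon}(\mathcal{H})$. For the first, I apply Theorem~\ref{theorem:reduction} with the Gelfand triple $W = H^1(S^1, \mathbb{C}^{2n})$, $E = H^{1/2}(S^1, \mathbb{C}^{2n})$, $H = L^2(S^1, \mathbb{C}^{2n})$ and the unbounded family $D = \mathcal{A}^\mathbb{C}$. For $u \in W$ and $v \in E$, integration by parts combined with \eqref{Lformula1} yields
\begin{align*}
\langle \mathcal{A}^\mathbb{C}_\lambda u, v\rangle_H = \Gamma(u,v) + \int_0^{2\pi}\langle S_\lambda(t) u(t), v(t)\rangle\,dt,
\end{align*}
and the right-hand side extends to the bounded bilinear form on $E$ whose Riesz representation is $L^\mathbb{C}_\lambda$ by \eqref{Lformula2}. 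Hence the operators $B_\lambda$ produced by Theorem~\ref{theorem:reduction} are precisely $L^\mathbb{C}_\lambda$. The compatibility condition $W \subset \im(K)$ is a direct Fourier-series computation: the operator $K$ defined by \eqref{scalarK} divides the $k$-th coefficient by $k$ for $k \neq 0$, so $\im(K) \supset H^1$. Condition \eqref{kernelcond} follows from ODE regularity, since any $u \in \ker(L^\mathbb{C}_\lambda + isK_0)$ is a weak solution of $Ju' + S_\lambda u + isu = 0$ and thus lies in $H^1$. Theorem~\ref{theorem:reduction} then gives $\mu_{spec}(\mathcal{H}) = \mu_{Morse}(\mathcal{H})$.

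For the second equality, I reduce $\overline{\mathcal{A}^\mathbb{C}}$ to a finite-dimensional matrix family via the auxiliary morphism
\begin{align*}
\tilde T_z : H^1([0,2\pi], \mathbb{C}^{2n}) \to L^2([0,2\pi], \mathbb{C}^{2n}) \oplus \mathbb{C}^{2n},\quad \tilde T_z u = (Ju' + S_\lambda u + isu,\, u(0) - u(2\pi)),
\end{align*}
for $z = (\lambda, s) \in X \times \mathbb{R}$. By variation of parameters, each $\tilde T_z$ is Fredholm of index zero and invertible precisely when $I - \Psi_z(2\pi)$ is; by Lemma~\ref{detlemma} this places $\tilde T$ in $\mathcal{F}_{0,c}$ with the invertibility on $Y \times \mathbb{R}$ required for the relative class.

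Fixing a continuous section $\rho: \mathbb{C}^{2n} \to H^1([0,2\pi], \mathbb{C}^{2n})$ of the surjection $\delta: u \mapsto u(0) - u(2\pi)$, the splitting $H^1([0,2\pi]) = H^1(S^1) \oplus \rho(\mathbb{C}^{2n})$ renders $\tilde T_z$ upper block-triangular with diagonal entries $\overline{\mathcal{A}^\mathbb{C}}_z$ and $I_{\mathbb{C}^{2n}}$. A linear homotopy killing the off-diagonal block preserves fibrewise invertibility (which is governed by the diagonal), so by homotopy invariance and stability of the index bundle under trivial summands, $\ind(\tilde T) = \ind(\overline{\mathcal{A}^\mathbb{C}}) = \mu_{spec}(\mathcal{H})$. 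On the other hand, the transversal subspace $V = \{0\} \oplus \mathbb{C}^{2n}$ gives a finite-rank bundle $E(\tilde T, V)$ consisting of homogeneous ODE solutions, trivialised by $u \mapsto u(0)$; under this trivialisation $\tilde T|_{E(\tilde T, V)}$ becomes the matrix family $z \mapsto I - \Psi_z(2\pi)$. Hence $\mu_{spec}(\mathcal{H}) = [\Theta(\mathbb{C}^{2n}), \Theta(\mathbb{C}^{2n}), I - \Psi_{(\cdot)}(2\pi)]$, which agrees with $\mu_{mon}(\mathcal{H})$ once one notes that matrix transposition preserves relative $K^{-1}$-classes (directly via the first Chern number on intervals, and in general by a stable $K$-theoretic argument, since determinants are preserved).

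The main obstacle is the second step: ensuring that the block-triangularisation and the linear homotopy killing the off-diagonal genuinely stay inside $\mathcal{F}_{0,c}$ with the prescribed invertibility on $Y \times \mathbb{R}$, and that one correctly accounts for the stabilisation by the trivial summand when identifying $\ind(\tilde T)$ with $\ind(\overline{\mathcal{A}^\mathbb{C}})$. Handling the transpose discrepancy is expected to be routine, and indeed could be absorbed into the construction of $\tilde T_z$ by using the adjoint boundary-value problem, which would make $\Lambda(z)^T = (I - \Psi_z(2\pi))^T$ appear naturally in place of $I - \Psi_z(2\pi)$.
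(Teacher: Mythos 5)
Your Part~I coincides with the paper's: both apply Theorem~\ref{theorem:reduction} to the triple $W=H^1(S^1,\mathbb{C}^{2n})\subset E=H^{\frac12}(S^1,\mathbb{C}^{2n})\subset H=L^2(S^1,\mathbb{C}^{2n})$, identify $B=L^{\mathbb{C}}$ via \eqref{Lformula1}--\eqref{Lformula2}, check $\im K=W$ by Fourier series, and verify \eqref{kernelcond} by ODE regularity; this part is fine.

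Your Part~II takes a genuinely different route from the paper. You encode the boundary condition into the target and reduce $\overline{\mathcal{A}^{\mathbb{C}}}$ directly to the finite-dimensional family $z\mapsto I_{2n}-\Psi_z(2\pi)$ via a block triangularisation of the boundary-value morphism $\tilde T_z$ and a homotopy killing the off-diagonal block. The paper instead conjugates $\overline{\mathcal{A}^{\mathbb{C}}}_z$ by $M_z$ and $M_z^T$ (multiplication by $\Psi_z$ and $\Psi_z^T$) to reduce it to the constant operator $u\mapsto Ju'$ over the varying bundle $\mathcal{E}$, and then computes $E(T,Y_1)$ explicitly. Your reduction is cleaner in that it avoids the auxiliary bundle $\mathcal{E}$, but it arrives at $I_{2n}-\Psi_z(2\pi)$ rather than $\Lambda(z)=(I_{2n}-\Psi_z(2\pi))^T$, precisely because the paper's $M_z^T$-factor introduces the transpose for free; and here your argument has a real gap.

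The gap is your claim that ``matrix transposition preserves relative $K^{-1}$-classes \ldots in general by a stable $K$-theoretic argument, since determinants are preserved.'' This is false: on $SU(2)\cong S^3$ the map $A\mapsto A^T$ sends $(a,b)\mapsto (a,-\bar b)$ and hence has degree $-1$, so it acts as $-1$ on $\pi_3(SU(2))\cong\pi_3(U)\cong K^{-1}(S^3)$. Equivalently, transposition changes the sign of $\ch_{2k-1}$ whenever $k$ is even, so preserving the determinant ($\ch_1$, the first Chern number) does not suffice; your argument is only valid when $X$ is an interval, and your vague appeal to the adjoint boundary-value problem is not worked out (note also $\Lambda(z)^T=I_{2n}-\Psi_z(2\pi)$, not $(I_{2n}-\Psi_z(2\pi))^T$). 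The gap can however be closed using the special structure of the monodromy: differentiating $\Psi_z^T(t)J\Psi_z(t)$ along \eqref{ode} shows that $\Psi_z^T(t)J\Psi_z(t)=J$ for all $z=(\lambda,s)\in X\times\mathbb{R}$, including $s\neq 0$ (since $(S_\lambda+isI_{2n})^T=S_\lambda+isI_{2n}$). Hence $\Psi_z(2\pi)^T=J\Psi_z(2\pi)^{-1}J^{-1}$ and a short computation gives
\begin{align*}
(I_{2n}-\Psi_z(2\pi))^T \;=\; J\,\Psi_z(2\pi)^{-1}\,(I_{2n}-\Psi_z(2\pi))\,J^{-1}.
\end{align*}
Since $J\Psi_z(2\pi)^{-1}$ and $J^{-1}$ are bundle automorphisms of $\Theta(\mathbb{C}^{2n})$, the triples $\{\Theta(\mathbb{C}^{2n}),\Theta(\mathbb{C}^{2n}),I_{2n}-\Psi_{(\cdot)}(2\pi)\}$ and $\{\Theta(\mathbb{C}^{2n}),\Theta(\mathbb{C}^{2n}),\Lambda\}$ are isomorphic, which gives the missing equality of $K$-theory classes. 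With this correction, your approach to Part~II is valid and is a legitimate alternative to the paper's argument.
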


Note that the indices in Theorem \ref{main} are trivial if $Y=\emptyset$ by Remark \ref{rem:specindtriv}. Non-vanishing indices can be obtained, for instance, if $X$ is a compact interval and $Y=\partial X$ is its boundary. We devote the following section to this special case of Theorem \ref{main}.


\section{The special case of paths}\label{section:paths}
In this section we discuss Theorem \ref{main} in the case that the families of operators are paths. Let us recall at first the definition of the \textit{Conley-Zehnder index} for paths of symplectic matrices, where we follow the approach of Robbin and Salamon in \cite{Robbin-Salamon-Maslov}:\\
An $n$-dimensional subspace $\ell\subset V$ of a (real) symplectic vector space $(V,\omega)$ of dimension $2n$ is called \textit{Lagrangian} if the restriction of the symplectic form $\omega$ to $\ell$ vanishes. The set $\Lambda(V)$ of all Lagrangian subspaces of $V$ forms a smooth submanifold of the ordinary Grassmannian of all $n$-dimensional subspaces of $V$, which is called the \textit{Lagrangian Grassmannian}. We fix a Lagrangian subspace $\ell_0\in\Lambda(V)$ of $V$. The \textit{Maslov index} $m(\gamma,\ell_0)$ provides an integer-valued homotopy invariant of paths $\gamma:[a,b]\rightarrow\Lambda(V)$ whose endpoints are transverse to $\ell_0$. Roughly speaking, it is the number of non-transverse intersections of $\gamma([a,b])$ with $\ell_0$. There are several approaches to the construction of this invariant and we refer to \cite{Cappell} for a detailed exposition. Let us now restrict our discussion to the symplectic space $(V,\omega)$, where $V=\mathbb{R}^{2n}\times\mathbb{R}^{2n}$ and $\omega$ is the symplectic form induced by 

\[\begin{pmatrix}
-J&0\\
0&J
\end{pmatrix}.\]
Here $J$ denotes as before the standard symplectic matrix \eqref{standardmatrix}. Given a symplectic matrix $M\in\Sp(2n,\mathbb{R})$, its graph 

\[\gra(M)=\{(u,M u)\in\mathbb{R}^{2n}\times\mathbb{R}^{2n}:\,u\in\mathbb{R}^{2n}\}\]
is a Lagrangian subspace of $(V,\omega)$. Of course, the diagonal $\Delta\subset\mathbb{R}^{2n}\times\mathbb{R}^{2n}$ is a Lagrangian subspace of $(V,\omega)$ as well, and $\gra(M)\cap\Delta\neq\{0\}$ if and only if $\det(I_{2n}-M)=0$. Consequently, if $M:[a,b]\rightarrow\Sp(2n,\mathbb{R})$ is a path such that $\det(I_{2n}-M_a)$ and $\det(I_{2n}-M_b)$ do not vanish, then the endpoints of the path $\gra(M):[a,b]\rightarrow\Lambda(V)$ are transverse to $\Delta\subset\mathbb{R}^{2n}\times\mathbb{R}^{2n}$. The \textit{Conley-Zehnder index} of $M$ is defined by

\begin{align*}
\mu_{CZ}(M)=m(\gra(M),\Delta)\in\mathbb{Z}.
\end{align*}
Now assume that $X=[a,b]$ parametrises the family of Hamiltonian systems \eqref{equation} and $Y=\partial X=\{a,b\}$. The initial value problems

\begin{equation}\label{odepath}
\left\{
\begin{aligned}
J\Psi'_\lambda(t)+S_\lambda(t)\Psi_\lambda(t)&=0,\quad t\in[0,2\pi],\\
\Psi_\lambda(0)&=I_{2n},
\end{aligned}
\right.
\end{equation}
induce a path $M:[a,b]\rightarrow\Sp(2n,\mathbb{R})$ of symplectic matrices by $M_\lambda=\Psi_\lambda(2\pi)$. Note that \eqref{odepath} coincides with equation \eqref{ode} from the definition of the monodromy index if $s=0$. If \eqref{equation} is admissible, then $I_{2n}-\Psi_a(2\pi)$ and $I_{2n}-\Psi_b(2\pi)$ are invertible because of Lemma \ref{detlemma} and hence the Conley-Zehnder index of $M$ is defined. Note that, heuristically, $\mu_{CZ}(M)$ counts instants $\lambda\in X$ for which $\gra(\Psi_\lambda(2\pi))\cap\Delta\neq\{0\}$, i.e. $\lambda\in X$ for which the linear Hamiltonian system \eqref{diffop} admits non-trivial periodic solutions.\\ 
Let us now investigate the three indices of Theorem \ref{main} for $X=[a,b]$ and $Y=\partial X$. The families of operators $L$ and $\mathcal{A}$ from Section \ref{section:indthm} are then paths of selfadjoint Fredholm operators acting on the real Hilbert spaces $H^\frac{1}{2}(S^1,\mathbb{R}^{2n})$ and $L^2(S^1,\mathbb{R}^{2n})$, respectively. Accordingly, the spectral flows $\sfl(L)$ and $\sfl(\mathcal{A})$ are defined, and if \eqref{equation} is admissible, then $L$ and $\mathcal{A}$ have invertible endpoints. Since the spectral flow is invariant under complexification by \eqref{invsfl}, we obtain from Proposition \ref{prop:sfl}

\[\sfl(L)=c_1(\mu_{Morse}(\mathcal{H}))\quad\text{and}\quad \sfl(\mathcal{A})=c_1(\mu_{spec}(\mathcal{H})).\]
Let us recall from \eqref{Chern} that the first Chern number $c_1$ for an element in $K^{-1}(X,Y)$ of the form $[\Theta(\mathbb{C}^{2n}),\Theta(\mathbb{C}^{2n}),\tilde{a}]$ is given by

\[c_1([\Theta(\mathbb{C}^{2n}),\Theta(\mathbb{C}^{2n}),\tilde{a}])=w(\det(\tilde{a})\circ\kappa,0),\]
when $\kappa:S^1\rightarrow X\times\mathbb{R}$ is any simple positively oriented curve surrounding the support of $\tilde{a}$. From this we get for the monodromy index

\[c_1(\mu_{mon}(\mathcal{H}))=w(\rho\circ\kappa,0),\]
where $\rho$ is the planar vector field on $X\times\mathbb{R}\subset\mathbb{C}$ defined by

\[\rho:X\times\mathbb{R}\rightarrow\mathbb{C},\quad \rho(z)=\det\Lambda(z)\]
and $\Lambda:X\times\mathbb{R}\rightarrow M(2n,\mathbb{C})$ is the matrix family introduced in \eqref{align:lambda}. Note that according to Lemma \ref{detlemma}, the support of $\mu_{mon}(\mathcal{H})$ is contained in $(a,b)\times\{0\}$. Finally, it is proved in \cite[Prop. 2.1]{SFLPejsachowiczII} that the spectral flow of $L$ is equal to the Conley-Zehnder index of the path $M$ in \eqref{odepath}. In summary, we obtain from Theorem \ref{main} the following result.

\begin{theorem}\label{mainpath}
If \eqref{equation} is admissible, then

\[\sfl(L)=\sfl(\mathcal{A})=w(\rho\circ\kappa,0)=\mu_{CZ}(M)\in\mathbb{Z},\]
where $\kappa:S^1\rightarrow X\times\mathbb{R}$ is any simple positively oriented curve surrounding $(a,b)\times\{0\}$.
\end{theorem}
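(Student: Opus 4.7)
The strategy is to apply the first Chern number isomorphism $c_1:K^{-1}(X,\partial X)\rightarrow\mathbb{Z}$ to the three-term equality of Theorem \ref{main}, so that the real content of this result lies already in Theorem \ref{main}, and what remains is essentially bookkeeping plus one citation.

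First, I would verify the equality $\sfl(L)=\sfl(\mathcal{A})$. Applying $c_1$ to Theorem \ref{main} gives
\[c_1(\mu_{Morse}(\mathcal{H}))=c_1(\mu_{spec}(\mathcal{H}))=c_1(\mu_{mon}(\mathcal{H})).\]
By definition $\mu_{Morse}(\mathcal{H})=\sind(L^\mathbb{C})$ and $\mu_{spec}(\mathcal{H})=\sind(\mathcal{A}^\mathbb{C})$, and Proposition \ref{prop:sfl} identifies $c_1\circ\sind$ with the spectral flow on paths with invertible endpoints. The invariance of the spectral flow under complexification (equation \eqref{invsfl} as referenced in the text) then yields $c_1(\mu_{Morse}(\mathcal{H}))=\sfl(L)$ and $c_1(\mu_{spec}(\mathcal{H}))=\sfl(\mathcal{A})$. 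Admissibility guarantees invertibility of the endpoints, so these identifications apply.

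Next, the equality $\sfl(\mathcal{A})=w(\rho\circ\kappa,0)$ follows from the explicit description of $c_1$ on classes represented by morphisms between trivial bundles of the form $[\Theta(\mathbb{C}^{2n}),\Theta(\mathbb{C}^{2n}),\tilde{a}]$, namely
\[c_1([\Theta(\mathbb{C}^{2n}),\Theta(\mathbb{C}^{2n}),\tilde{a}])=w(\det(\tilde{a})\circ\kappa,0),\]
applied to $\tilde{a}=\Lambda$. Here I would note that by Lemma \ref{detlemma} together with admissibility, $\Lambda(z)$ is invertible outside $(a,b)\times\{0\}$, so the support of $\mu_{mon}(\mathcal{H})$ is compact and contained in this set. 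Any simple positively oriented curve $\kappa:S^1\rightarrow X\times\mathbb{R}$ enclosing $(a,b)\times\{0\}$ therefore encloses the full support of $\det\Lambda$, so the winding number is well-defined and independent of $\kappa$ by standard planar topology.

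Finally, the equality $\sfl(L)=\mu_{CZ}(M)$ is exactly the content of \cite[Prop. 2.1]{SFLPejsachowiczII}, which I would simply cite. I expect no real obstacle here: since Theorem \ref{main} and Proposition \ref{prop:sfl} do the heavy lifting, the only mild point to check is that the hypotheses on $\kappa$ in the statement are sufficient for the winding-number interpretation of $c_1(\mu_{mon}(\mathcal{H}))$, which reduces to the observation above about the compact support of $\Lambda-I_{2n}$'s degeneracy locus.
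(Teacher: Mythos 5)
Your proposal matches the paper's argument essentially step for step: apply $c_1$ to Theorem \ref{main}, invoke Proposition \ref{prop:sfl} and \eqref{invsfl} to identify $c_1(\mu_{Morse}(\mathcal{H}))$ and $c_1(\mu_{spec}(\mathcal{H}))$ with $\sfl(L)$ and $\sfl(\mathcal{A})$, use the formula \eqref{Chern} together with Lemma \ref{detlemma} to identify $c_1(\mu_{mon}(\mathcal{H}))$ with the winding number $w(\rho\circ\kappa,0)$, and cite \cite[Prop.\ 2.1]{SFLPejsachowiczII} for $\sfl(L)=\mu_{CZ}(M)$. The only small slip is a typographical one at the end --- the relevant degeneracy locus is that of $\Lambda$ itself (equivalently where $1$ is a Floquet multiplier), not of ``$\Lambda-I_{2n}$'' --- but this does not affect the argument.
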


\begin{rem}
The equality of the spectral flow of $\mathcal{A}$ and the Conley-Zehnder index of $M$ was originally proven by Salamon and Zehnder in Theorem 3 of \cite{Salamon-Zehnder}.
\end{rem}

The definitions of the Conley-Zehnder index of the path $M$ and the winding number of the planar vector field $\rho$ in Theorem \ref{mainpath} seem to us rather different in nature, although also the definition of $\mu_{CZ}(M)$ involves a winding number (cf. \cite{Salamon-Zehnder}). We want to point out that there is a well studied theory for computing winding numbers of planar vector fields (cf. eg. \cite{KrasnoselskiiI}). As an example, let us assume that the components of $S_\lambda(t)$ in \eqref{Smatrix} depend analytically on $(\lambda,t)\in(a,b)\times\mathbb{R}$. Then $\rho$ is analytic as well and hence its zeroes are a finite subset of $(a,b)\times\{0\}$. From the properties of the winding number it is clear that we can assume without loss of generality that there exists only a single zero, which is then of the form $z_0=(\lambda_0,0)\in (a,b)\times\mathbb{R}$. Let $\kappa:S^1\rightarrow X\times\mathbb{R}$ be a simple positively oriented curve surrounding $z_0$.\\
We have in a neighbourhood of $z_0$

\begin{align*}
\rho(z)=\rho(\lambda,s)=(P(\lambda-\lambda_0,s)+f(\lambda-\lambda_0,s))+i(Q(\lambda-\lambda_0,s)+g(\lambda-\lambda_0,s)),
\end{align*}
where $P$ and $Q$ are real homogenous polynomials of degree $m$ and $n$, respectively, and $|f(z)|=O(|z|^{m+1})$, $|g(z)|=O(|z|^{n+1})$. Set $\eta:=P+iQ$ and let us require henceforth that $z_0$ is the only zero of $\eta$.  We see at once that 

\[w(\rho\circ\kappa,0)=w(\eta\circ\kappa,0),\]
and now the winding number of $\eta\circ\kappa$ can be computed directly from the coefficients of $\eta$ as follows: we assume without loss of generality that $m\geq n$ and set $N_0(s):=P(1,s)$, $N_1(s):=Q(1,s)$. Let $N_0,N_1,\ldots,N_l$ be non-trivial polynomials such that $N_{i+1}(s)$ is the rest of the division of $N_{i-1}(s)$ by $N_i(s)$ so that, incidentally, $N_l(s)$ is the greatest common divisor of $N_0$ and $N_1$ by the Euclidean algorithm. For any $r\in\mathbb{R}$ which is not a zero of any of the polynomials $N_i$, we denote by $m(r)$ the number of sign changes in the sequence of integers $N_0(r),\ldots,N_l(r)$. Since $m(r)$ is defined and constant if $r$ or $-r$ is sufficiently large, the limits $m_+$ and $m_-$ of $m(r)$ for $r\rightarrow\pm\infty$ exist. Now Theorem 10.2 of \cite{KrasnoselskiiII} provides the following simple formula for the indices in Theorem \ref{mainpath}:

\begin{align}\label{analytic}
w(\rho\circ\kappa,0)=(1+(-1)^{m+n})\,\frac{m_+-m_-}{2}.
\end{align}



\section{Bifurcation}
In this section we consider the generally nonlinear Hamiltonian systems \eqref{equation}, where we no longer assume the parameter space $X$ to be compact. Note that $u\equiv 0$ solves all these equations and our aim is to study multiparameter bifurcation from this branch of trivial periodic solutions by using Theorem \ref{main}. In what follows we assume as in Section \ref{section:indthm} that \eqref{equation} is admissible, i.e. the operators $L_\lambda$ introduced in \eqref{Lformula2} are invertible for all $\lambda\in Y$.

\begin{defi}
We call $\lambda^\ast\in X$ a bifurcation point of periodic solutions for the Hamiltonian system \eqref{equation} if every neighbourhood of $(\lambda^\ast,0)$ in $X\times H^\frac{1}{2}(S^1,\mathbb{R}^{2n})$ contains elements $(\lambda,u)$, where $u\neq 0$ is a solution of \eqref{equation} for the parameter value $\lambda$.
\end{defi}

We denote by $B(\mathcal{H})\subset X$ the set of all bifurcation points of the family \eqref{equation} and we observe that $B(\mathcal{H})$ is closed by the very definition of a bifurcation point.\\
In \cite{AleBifIch} the author has studied in collaboration with A. Portaluri several parameter bifurcation of critical points for abstract families of functionals by using the main theorem of \cite{SFLPejsachowicz}. For the optimal results we need to assume simply connected parameter spaces. Here we want to use the subsequent article \cite{JacBifIch}, written in collaboration with J. Pejsachowicz, and our Theorem \ref{main} in order to investigate several parameter bifurcation for periodic solutions of Hamiltonian systems as critical points of the functional $\psi$ in Section \ref{section:indthm}. Interestingly, it turns out that we do not need to require simple connectedness of the parameter space $X$ any longer due to the special form of the operators $L$ from Section \ref{section:indthm}.\\
Let us recall that in the definition of the monodromy index in \eqref{align:lambda}, we defined a matrix family $\Lambda:X\times\mathbb{R}\rightarrow M(2n,\mathbb{C})$ by using the monodromy matrices of the family of equations \eqref{ode}. Let $I=[a,b]\subset\mathbb{R}$ denote a compact interval. Given a path $\gamma:(I,\partial I)\rightarrow (X,Y)$, we obtain by composition a map

\[\Lambda\circ(\gamma,id):I\times\mathbb{R}\rightarrow M(2n,\mathbb{C})\]
and an integer

\[d(\gamma):=w(\det(\Lambda\circ(\gamma,id))\circ \kappa,0)\in\mathbb{Z},\]
where as before $\kappa:S^1\rightarrow I\times\mathbb{R}$ is any simple positively oriented path surrounding $(a,b)\times\{0\}$ and $w(\cdot,0)$ denotes the winding number with respect to $0$ for closed curves in $\mathbb{C}\setminus\{0\}$. Since $\det(\Lambda(\gamma(t),s))\neq 0$ for all $(t,s)\notin (a,b)\times\{0\}$ by Lemma \ref{detlemma}, $d(\gamma)$ does not depend on the choice of $\kappa$. Note furthermore that $d(\gamma)=\mu_{CZ}(M\circ\gamma)$ by Theorem \ref{mainpath}, where $M:X\rightarrow\Sp(2n,\mathbb{R})$ is the associated family of monodromy matrices of the equations \eqref{odepath}.

\begin{theorem}\label{theorem:bifurcation}
Assume that the admissible family of Hamiltonian systems \eqref{equation} is parametrised by a path-connected topological space $X$ and let $\emptyset\neq Y\subset X$ be a closed subspace.
\begin{enumerate}
	\item[i)] If there exists a path $\gamma:(I,\partial I)\rightarrow (X,Y)$ such that $d(\gamma)\neq 0$, then $B(\mathcal H)$ disconnects $X$.
	\item[ii)] If there exists a sequence of paths $\gamma_n:(I,\partial I)\rightarrow(X,Y)$, $n\in\mathbb{N}$, such that 
	
	\begin{align*}
	\lim_{n\rightarrow\infty}|d(\gamma_n)|=\infty,
	\end{align*} 
	then $X\setminus B(\mathcal H)$ has infinitely many path components.
\end{enumerate}
\end{theorem}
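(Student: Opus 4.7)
The plan is to reduce the multi-parameter statement to the one-parameter bifurcation theorem of \cite{SFLPejsachowiczII} via Theorem \ref{main}. For any path $\gamma\colon(I,\partial I)\to(X,Y)$, pulling back the equality $\mu_{Morse}(\mathcal{H})=\mu_{mon}(\mathcal{H})$ of Theorem \ref{main} under $\gamma$ and applying $c_1$ gives, by Proposition \ref{prop:sfl} and Theorem \ref{mainpath} applied to the pullback family, the identity
\[\sfl(L\circ\gamma)\;=\;d(\gamma).\]
Since $L\circ\gamma$ is the family of Hessians along the trivial critical branch of the pulled-back functional, the bifurcation theorem for paths of Hamiltonian systems from \cite{SFLPejsachowiczII} then yields: if $d(\gamma)\neq 0$, then $\gamma(I)\cap B(\mathcal{H})\neq\emptyset$.

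For part i), I would argue by contradiction: assume $B(\mathcal{H})$ does not disconnect $X$. The implicit function theorem applied to $L$ on $Y$ shows $Y\cap B(\mathcal{H})=\emptyset$, so $\gamma(a),\gamma(b)$ lie in a common path-component of $X\setminus B(\mathcal{H})$; pick a path $\delta$ between them in $X\setminus B(\mathcal{H})$. The one-parameter result forces $d(\delta)=0$, so it suffices to show $d(\gamma)=d(\delta)$. Following the approach of \cite{AleBifIch}, this would be achieved by using property v) of Section \ref{section:sindbundle} to compactly perturb $\mathcal{H}$ on a neighbourhood of $\delta(I)\cup Y$ so that the modified Hessian $\tilde L$ is invertible there while $\mu_{Morse}(\tilde{\mathcal{H}})=\mu_{Morse}(\mathcal{H})$; then $d(\gamma)-d(\delta)$ becomes the pairing of the fixed class $\mu_{mon}(\mathcal{H})$ with the relative $1$-cycle $\gamma-\delta$, which vanishes because after the perturbation this cycle bounds inside the invertibility set of $\tilde L$.

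Part ii) then follows from the same mechanism: the argument above shows that $d(\gamma)$ depends only on the pair of path-components of $X\setminus B(\mathcal{H})$ containing the endpoints of $\gamma$; hence, if $X\setminus B(\mathcal{H})$ had only finitely many path-components, the set $\{d(\gamma)\}$ would be finite, contradicting $|d(\gamma_n)|\to\infty$.

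The hard part will be the perturbation step in i): although the one-parameter theorem readily produces a bifurcation point on any path $\gamma$ with $d(\gamma)\neq 0$, upgrading this to the global separation statement requires localising $\mu_{Morse}(\mathcal{H})$ near $B(\mathcal{H})$ by a compact perturbation that preserves the $K$-theory class. Once this localisation is available, the computation of $d$ on any $\gamma$ reduces to a topological pairing with cycles in $X\setminus B(\mathcal{H})$, and both assertions become formal consequences of the homotopy invariance and the additivity of $d$ under concatenation of paths.
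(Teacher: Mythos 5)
Your overall reduction coincides with the paper's: pulling back along a path and applying Theorem \ref{mainpath} gives $\sfl(L\circ\gamma)=d(\gamma)$, the implicit function theorem gives $B(\mathcal H)\cap Y=\emptyset$, and Theorem \ref{SFLPejsachowiczIIbif} (i.e. \cite[Thm. 2.2]{SFLPejsachowiczII}) forces $\sfl(L\circ\delta)=0$ for any (sufficiently smooth) path $\delta$ with invertible endpoints avoiding $B(\mathcal H)$. The gap is in the step you yourself identify as the hard part: why $d(\gamma)=d(\delta)$. What is needed is that the spectral flow of $L$ around the closed loop $\gamma\ast\delta^{-1}$ vanishes. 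Your proposed mechanism -- compactly perturbing $\mathcal H$ near $\delta(I)\cup Y$ so that the modified Hessian is invertible there and then asserting that the relative $1$-cycle $\gamma-\delta$ ``bounds inside the invertibility set'' -- is not substantiated and does not work in general: nothing forces this loop to be null-homotopic or null-homologous in $X$, let alone in the invertibility set, since $X$ is an arbitrary closed manifold. An argument of that type is precisely the route of \cite{AleBifIch}, which requires a simply connected parameter space; the whole point of this theorem is that simple connectedness is \emph{not} assumed. Moreover, it is not explained why such a perturbation of $\mathcal H$ exists, nor how $d$ computed from the perturbed monodromy data is compared with the original one.

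The paper closes exactly this gap using the special structure of the Hessians rather than any topological assumption on $X$: by Lemma \ref{comppert}, $L_\lambda=A+K_\lambda$ with $A$ a \emph{fixed} selfadjoint Fredholm operator and $K_\lambda$ compact, and by item iii) of Lemma \ref{lemma:propertiessfl} every \emph{closed} path of this form has vanishing spectral flow (compare it with the constant path $A+K_{a}$, the difference being a path of compact operators vanishing at the endpoints). Then concatenation additivity gives
\begin{align*}
0=\sfl\bigl(L\circ(\gamma\ast\delta^{-1})\bigr)=\sfl(L\circ\gamma)-\sfl(L\circ\delta)=d(\gamma)-d(\delta),
\end{align*}
which yields part i), and the same loop argument shows that $d(\gamma)$ depends only on the path components of $X\setminus B(\mathcal H)$ containing the endpoints, so your finiteness argument for part ii) (a slight repackaging of the paper's construction of paths with pairwise distinct $d$) goes through once this invariance is in place; one should also take the connecting paths smooth so that the pulled-back Hamiltonians are $C^2$ and \cite{SFLPejsachowiczII} applies. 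Without invoking Lemma \ref{comppert} and Lemma \ref{lemma:propertiessfl} iii) (or an equivalent substitute), the proposal as written does not prove the theorem.
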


Intuitively, part i) of Theorem \ref{theorem:bifurcation} states that $B(\mathcal H)$ is a subset of codimension $1$ in $X$. Let us recall that the \textit{covering dimension} $\dim\mathcal{X}$ of a topological space $\mathcal{X}$ is the minimal value of $n\in\mathbb{N}$ such that every finite open cover of $\mathcal{X}$ has a finite open refinement in which no point is included in more than $n+1$ elements. Now, from the disconnectedness of its complement in $X$, it indeed follows from Theorem \ref{theorem:bifurcation} that the covering dimension of $B(\mathcal H)$ is at least $\dim(X)-1$ if $X$ is a topological manifold (cf. \cite[Prop. V.6]{Dimension}).\\
Finally, setting $(X,Y)=(I,\partial I)$, we obtain from the first assertion of Theorem \ref{theorem:bifurcation} that if one of the integers in Theorem \ref{mainpath} is non-zero, then any neighbourhood of $I\times\{0\}$ in $I\times H^\frac{1}{2}(S^1,\mathbb{R}^{2n})$ contains non-trivial solutions of \eqref{equation}. This result was proved in \cite[Theorem 2.2]{SFLPejsachowiczII} under the additional assumption that $\mathcal{H}$ depends smoothly on the parameter $\lambda\in I$.


\section{Proofs of the theorems}\label{section:proofs}
In this section we give the proofs of Theorem \ref{theorem:reduction}, Theorem \ref{main} and Theorem \ref{theorem:bifurcation}.

\subsection{Proof of Theorem \ref{theorem:reduction}}\label{proof:reduction}
Let us recall at first that $\sind(B)=\ind(\overline{B})$ and $\sind(D)=\ind(\overline{D})$, where

\begin{align*}
\overline{B}_{(\lambda,s)}=B_\lambda+is\,I_E,\quad \overline{D}_{(\lambda,s)}=D_\lambda+is\, I_H,\quad(\lambda,s)\in X\times\mathbb{R}.
\end{align*}
We divide the proof of Theorem \ref{theorem:reduction} into three parts. In the first part we introduce a further family of Fredholm operators $\tilde{B}$ parametrised by $X\times\mathbb{R}$. In the subsequent second and third part we show that $\ind(\tilde{B})=\sind(B)$ and $\ind(\tilde{B})=\sind(D)$, respectively.

\subsubsection*{Step 1}
We define a family of bounded operators by

\begin{align*}
\tilde{B}:X\times\mathbb{R}\rightarrow\mathcal{L}(E),\quad\tilde{B}_{(\lambda,s)}u=B_\lambda u+is\,K_0u,
\end{align*}
where $K_0$ is the compact nonnegative operator introduced in \eqref{scalarK0}. Since $\tilde{B}_{(\lambda,s)}$ is a compact perturbation of $B_\lambda$, it is a Fredholm operator of index $0$. Now consider for $(\lambda,s)\in X\times\mathbb{R}$ the diagram

\begin{align}\label{diagram}
\begin{split}
\xymatrix{
E\ar[rr]^{\tilde{B}_{(\lambda,s)}}&&E\\
W\ar[u]^\iota\ar[rr]^{\overline{D}_{(\lambda,s)}}&&H\ar[u]_K
}
\end{split}
\end{align}
where $\iota:W\rightarrow E$ denotes the continuous inclusion. The diagram commutes because of

\begin{align*}
\langle B_\lambda u+is\,K_0u,v\rangle_E&=\langle B_\lambda u,v\rangle_E+is\langle K_0u,v\rangle_E=\langle D_\lambda u+is\,u,v\rangle_H\\
&=\langle K(D_\lambda u+is\,u),v\rangle_E,\quad u\in W,\,v\in E.
\end{align*}
The next lemma deals with the relationship between the families $\overline{B}$ and $\tilde{B}$.

\begin{lemma}\label{lemma:proof}
For $(\lambda,s)\in X\times\mathbb{R}$ the following assertions are equivalent:

\begin{enumerate}
	\item[i)] $\ker\tilde{B}_{(\lambda,s)}\neq 0$,
	\item[ii)] $\ker\overline{B}_{(\lambda,s)}\neq 0$,
	\item[iii)] $s=0$ and $\ker B_\lambda\neq 0$. 
\end{enumerate}

\end{lemma}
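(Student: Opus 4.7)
The plan is to prove the equivalences by the natural chain: the trivial direction is (iii) $\Rightarrow$ (i) and (iii) $\Rightarrow$ (ii), since setting $s=0$ collapses both $\tilde B_{(\lambda,s)}$ and $\overline B_{(\lambda,s)}$ to $B_\lambda$ itself, so any nonzero element of $\ker B_\lambda$ lies in both kernels. The substantive content is therefore the two implications (i) $\Rightarrow$ (iii) and (ii) $\Rightarrow$ (iii), both of which are a standard selfadjointness/imaginary-part argument.

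For (ii) $\Rightarrow$ (iii), I would pick $0\neq u\in \ker\overline{B}_{(\lambda,s)}$, so $B_\lambda u=-is\,u$ in $E$, and pair with $u$ in the $E$-inner product:
\[
\langle B_\lambda u,u\rangle_E=-is\,\|u\|_E^2.
\]
Selfadjointness of $B_\lambda$ (it is the Riesz representation of the real symmetric form $\tilde a_\lambda$) forces the left-hand side to be real, while $\|u\|_E^2>0$, so $s=0$ and hence $B_\lambda u=0$, which gives (iii).

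For (i) $\Rightarrow$ (iii), I would argue analogously but use the key identity \eqref{scalarK0}. Taking $0\neq u\in \ker \tilde B_{(\lambda,s)}$ gives $B_\lambda u=-is\,K_0u$, and pairing with $u$ in $E$ yields
\[
\langle B_\lambda u,u\rangle_E=-is\langle K_0u,u\rangle_E=-is\,\|u\|_H^2
\]
by \eqref{scalarK0}. Since $E\hookrightarrow H$ is an injection (the compact embedding from the setup of Section~\ref{section:reduction}), $u\neq 0$ in $E$ implies $\|u\|_H>0$, while $\langle B_\lambda u,u\rangle_E\in\mathbb R$ by selfadjointness. Hence $s=0$, and then $\tilde B_{(\lambda,0)}=B_\lambda$, giving $u\in\ker B_\lambda$, i.e.\ (iii).

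I do not expect a real obstacle here; the only point requiring some care is checking that the compact embedding $E\hookrightarrow H$ is injective so that $\|u\|_H>0$ whenever $u\neq 0$ in $E$ (this is part of the data, since the embedding is a continuous injection with dense image via $W$). Once this is in hand, the whole lemma is the two-line imaginary-part argument above, applied once to $\overline B$ and once to $\tilde B$.
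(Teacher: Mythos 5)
Your proof is correct, but for the crucial implication i)~$\Rightarrow$~iii) it takes a genuinely different route from the paper. The paper argues through the unbounded family: it invokes the hypothesis \eqref{kernelcond} to conclude that any $u\in\ker\tilde{B}_{(\lambda,s)}$ lies in $W$, then uses the commutativity of the diagram \eqref{diagram} together with the injectivity of $K$ to translate $B_\lambda u+is\,K_0u=0$ into $D_\lambda u+is\,u=0$, and finally gets $s=0$ and $u\in\ker D_\lambda$ (hence $u\in\ker B_\lambda$) from the selfadjointness of $D_\lambda$. You instead argue entirely inside $E$: pairing with $u$ and using \eqref{scalarK0} to rewrite $\langle K_0u,u\rangle_E=\|u\|_H^2$, the selfadjointness of $B_\lambda$ forces $s\|u\|_H^2=0$, and $\|u\|_H>0$ because $E\subset H$ as sets (so the embedding is injective by definition; your appeal to density via $W$ is not what does the work, and the form $\tilde a_\lambda$ is Hermitian rather than ``real symmetric'', but both points are cosmetic since the paper supplies the selfadjointness of $B_\lambda$ directly). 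What your argument buys is that the lemma holds without the regularity hypothesis \eqref{kernelcond} and without the diagram \eqref{diagram}, using only the positivity identity for $K_0$; it is shorter and more self-contained. What the paper's argument buys is consistency with the rest of the proof of Theorem \ref{theorem:reduction}, where \eqref{kernelcond} is assumed anyway and is genuinely needed later (in Step 3, to see that the kernels of the $B_\lambda$ lie in $W\subset\im K$), so the paper simply exploits it here as well. Your treatment of ii)~$\Leftrightarrow$~iii) and of the trivial implication iii)~$\Rightarrow$~i) coincides with the paper's.
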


\begin{proof}
By using the selfadjointness of $B_\lambda$, it is clear that ii) and iii) are equivalent. Since iii) obviously implies i), it is sufficient to show that the first assertion implies the third one. Assume that $\tilde{B}_{(\lambda,s)}u=B_\lambda u+is\,K_0u=0$ for some $0\neq u\in E$. Then $u\in W$ by \eqref{kernelcond}, and since \eqref{diagram} is commutative and $K$ injective, we obtain $D_\lambda u+is\,u=0$. From the selfadjointness of $D_\lambda$, we conclude that $s=0$ and $u\in\ker D_\lambda$. Finally, by using again the commutativity of \eqref{diagram}, we see that $u\in\ker B_\lambda$. 
\end{proof} 

From the previous lemma we conclude that $\tilde{B}_{(\lambda,s)}$ is invertible for all $(\lambda,s)\notin X\times\{0\}$ and all $(\lambda,0)\in Y\times\{0\}$. Hence \[\ind\tilde{B}\in K^{-1}(X,Y)\] is defined.


\subsubsection*{Step 2}
We define a path of bounded selfadjoint operators on $E$ by $A(t)=(1-t)K_0+t\,I_E$, $t\in[0,1]$. Since $K_0$ is nonnegative, $A(t)$ is invertible for all $t\in(0,1]$. Hence there exists for any $t\in(0,1]$ a unique square root $A(t)^\frac{1}{2}$, which is an invertible and selfadjoint operator on $E$. We now consider the homotopy of Fredholm operators

\begin{align*}
h:[0,1]\times(X\times\mathbb{R}\times E)\rightarrow E,\quad h(t)_{(\lambda,s)}u=B_\lambda u+is\, A(t)u
\end{align*}
and note that $h(0)=\tilde{B}$ and $h(1)=\overline{B}$.

\begin{lemma}
$h(t)_{(\lambda,s)}$, $t\in[0,1]$, $(\lambda,s)\in X\times\mathbb{R}$, is invertible if $s\neq 0$.
\end{lemma}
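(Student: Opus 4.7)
The plan is to split on whether $t = 0$ or $t \in (0,1]$. The case $t = 0$ gives precisely $h(0)_{(\lambda,s)} = \tilde{B}_{(\lambda,s)}$, whose invertibility for $s \neq 0$ is exactly the content of Lemma \ref{lemma:proof}, so nothing further is needed there.

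For $t \in (0,1]$, I would first observe that $A(t) = (1-t)K_0 + t\,I_E$ is a strictly positive bounded selfadjoint operator on $E$, since $K_0$ is nonnegative and $\langle A(t)u,u\rangle_E \geq t\|u\|_E^2$. By the functional calculus, $A(t)$ admits a positive selfadjoint square root $A(t)^{1/2}$ which is itself bounded and invertible, and so is $A(t)^{-1/2}$.

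The key step is the factorization
\[
h(t)_{(\lambda,s)} = A(t)^{1/2}\,\bigl(A(t)^{-1/2} B_\lambda A(t)^{-1/2} + is\,I_E\bigr)\,A(t)^{1/2}.
\]
Set $M_{\lambda,t} := A(t)^{-1/2} B_\lambda A(t)^{-1/2}$. Since $B_\lambda$ and $A(t)^{-1/2}$ are selfadjoint on the complex Hilbert space $E$, so is $M_{\lambda,t}$, and hence $\sigma(M_{\lambda,t}) \subset \mathbb{R}$. Consequently $\sigma(M_{\lambda,t} + is\,I_E) \subset \mathbb{R} + is$, which avoids $0$ whenever $s \neq 0$. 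Thus the middle factor is invertible; the outer factors $A(t)^{1/2}$ are invertible by construction, so $h(t)_{(\lambda,s)}$ is invertible as a composition of invertibles.

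There is no real obstacle here: the only point requiring care is ensuring that $A(t)^{1/2}$ is genuinely invertible, which fails at $t = 0$ and is precisely why that boundary case must be handled separately via Lemma \ref{lemma:proof}. Everything else is a standard resolvent-type argument reducing invertibility to the fact that a selfadjoint operator on a complex Hilbert space has real spectrum.
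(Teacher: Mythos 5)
Your proof is correct and follows essentially the same route as the paper: handle $t=0$ via Lemma \ref{lemma:proof}, then for $t>0$ use the factorization $h(t)_{(\lambda,s)} = A(t)^{1/2}\bigl(A(t)^{-1/2}B_\lambda A(t)^{-1/2}+is\,I_E\bigr)A(t)^{1/2}$ and conclude by real spectrum of the selfadjoint middle factor. The only difference is that you spell out the strict positivity of $A(t)$ and the spectral mapping step, which the paper leaves implicit.
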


\begin{proof}
Note at first that $h(0)_{(\lambda,s)}=\tilde{B}_{(\lambda,s)}$ is invertible if $s\neq 0$ according to Lemma \ref{lemma:proof}. Hence we can restrict to the case $t>0$ and obtain

\[h(t)_{(\lambda,s)}=B_\lambda+is\,A(t)=A(t)^{\frac{1}{2}}(A(t)^{-\frac{1}{2}}B_\lambda A(t)^{-\frac{1}{2}}+is\,I_E)A(t)^{\frac{1}{2}}.\]
Since $A(t)^{-\frac{1}{2}}B_\lambda A(t)^{-\frac{1}{2}}$ is selfadjoint and $s\neq 0$, we conclude that $A(t)^{-\frac{1}{2}}B_\lambda A(t)^{-\frac{1}{2}}+is\,I_E$ and, consequently, also $h(t)_{(\lambda,s)}$ is invertible.
\end{proof}

From the definition of $h$ it is clear that $h(t)_{(\lambda,s)}$ is also invertible for all $t\in[0,1]$ if $(\lambda,s)\in Y\times\{0\}$. Now the homotopy invariance property ii) in Section \ref{section:indbundle} shows that

\[\sind(B)=\ind(\overline{B})=\ind(\tilde{B})\in K^{-1}(X,Y).\]


\subsubsection*{Step 3}
Since the operators $B_\lambda$ are selfadjoint and Fredholm, we have for each $\lambda\in X$ an orthogonal decomposition $E=\im(B_\lambda)\oplus\ker(B_\lambda)$. It is readily seen from the proof of Lemma \ref{transversal} that there exist $\lambda_1,\ldots,\lambda_N\in X$ such that $U:=\bigoplus_{i=1}^N{\ker B_{\lambda_i}}$ is transversal to the image of the family $B$ as in \eqref{transversality}, i.e.

\begin{align*}
\im(B_\lambda)+U=E,\quad \lambda\in X.
\end{align*}
Since $\tilde{B}_{(\lambda,s)}$ is invertible for $s\neq 0$ according to Lemma \ref{lemma:proof}, we conclude that 

\begin{align*}
\im(\tilde{B}_{(\lambda,s)})+U=E,\quad (\lambda,s)\in X\times\mathbb{R}.
\end{align*}
From the definition of $U$, \eqref{red:comp} and \eqref{kernelcond} we see that $U\subset W\subset\im K$ and hence there exists a finite dimensional subspace $V_1\subset H$ such that $K(V_1)\supset U$. Now choose $V_2\subset H$ finite dimensional such that

\begin{align*}
\im(\overline{D}_{(\lambda,s)})+V_2=H,\quad (\lambda,s)\in X\times\mathbb{R},
\end{align*} 
and set $V:=V_1+V_2$. We have by definition

\begin{align*}
\ind(\tilde{B})&=[E(\tilde{B},K(V)),\Theta(K(V)),\tilde{B}\mid_{E(\tilde{B},K(V))}],\quad 
\sind(D)=[E(\overline{D},V),\Theta(V),\overline{D}\mid_{E(\overline{D},V)}].
\end{align*}
It follows from the commutativity of \eqref{diagram} that the inclusion $\iota:W\rightarrow E$ maps $E(\overline{D},V)$ into $E(\tilde{B},K(V))$. Since
\[\dim E(\overline{D},V)=\dim V=\dim K(V)=\dim E(\tilde{B},K(V)),\] 
where we use \eqref{dimE} and the injectivity of $K$, $\iota$ actually induces an isomorphism. Finally, by employing again the commutativity of \eqref{diagram}, we obtain that the diagram

\begin{align*}
\xymatrix{
E(\tilde{B},K(V))\ar[rr]^{\tilde{B}}&&\Theta(K(V))\\
E(\overline{D},V)\ar[rr]^{\overline{D}}\ar[u]^\iota&&\Theta(V)\ar[u]_K
}
\end{align*}
commutes. Consequently, $\sind(D)=\ind(\tilde{B})$ by the definition of $K$-theory, and the proof of Theorem \ref{theorem:reduction} is complete.


\subsection{Proof of Theorem \ref{main}}\label{proof:main}
For the proof of Theorem \ref{main} we set in accordance with the notation in Section \ref{section:reduction}

\[W=H^1(S^1,\mathbb{C}^{2n}),\quad E=H^\frac{1}{2}(S^1,\mathbb{C}^{2n}),\quad H=L^2(S^1,\mathbb{C}^{2n}).\]


\subsubsection*{Part I: $\mu_{Morse}(\mathcal{H})=\mu_{spec}(\mathcal{H})$}
The first part of our proof is based on Theorem \ref{theorem:reduction}. We note at first that by \eqref{Lformula1} and \eqref{Lformula2}

\begin{align*}
\langle\mathcal{A}^\mathbb{C}_\lambda u,v\rangle_{H}=\langle L^\mathbb{C}_\lambda u,v\rangle_{E},\quad u\in W,\,\,v\in E,
\end{align*}
and consequently the families $\mathcal{A}^\mathbb{C}:X\rightarrow\mathcal{FS}(W,H)$ and $L^\mathbb{C}:X\rightarrow\mathcal{FS}(E)$ are in the relation assumed in Section \ref{section:reduction}.\\
We define $K:H\rightarrow E$ as the unique bounded selfadjoint operator such that

\begin{align}\label{align:proof}
\langle u,v\rangle_{H}=\langle Ku,v\rangle_{E},\quad u\in H,\,\, v\in E,
\end{align}
and denote by $K_0$ the restriction of $K$ to $E$. It is readily seen that

\begin{align*}
Ku=c_0+\sum^\infty_{k=1}{\frac{a_k}{k}\sin\,kt+\frac{b_k}{k}\cos\,kt},
\end{align*}
where $c_0,a_k,b_k\in\mathbb{C}^{2n}$, $k\in\mathbb{N}$, denote the Fourier coefficients of $u\in H$. Hence $\im K=W$, which shows \eqref{red:comp}.\\
Finally, let us assume that $u\in\ker(L^\mathbb{C}_\lambda+is\,K_0)$ for some $(\lambda,s)\in X\times\mathbb{R}$. We obtain from \eqref{align:proof}

\begin{align*}
0&=\langle L^\mathbb{C}_\lambda u,v\rangle_{E}+is\langle K_0u,v\rangle_{E}=\Gamma^\mathbb{C}(u,v)+\int^{2\pi}_0{\langle S_\lambda(t)u(t),v(t)\rangle dt}+is\langle u,v\rangle_{H}\\
&=\Gamma^\mathbb{C}(u,v)+\int^{2\pi}_0{\langle(S_\lambda(t)+is\,I_{2n})u(t),v(t)\rangle dt},\quad u,v\in E,
\end{align*}
where $\Gamma^\mathbb{C}$ denotes the complexification of the bilinear form $\Gamma$ introduced in \eqref{Gamma}. Consequently, $u$ is a weak solution of

\begin{equation*}
\left\{
\begin{aligned}
Ju'(t)+S_\lambda(t)&u(t)+is\,u(t)=0,\quad t\in [0,2\pi],\\
u(0)&=u(2\pi).
\end{aligned}
\right.
\end{equation*}
By a well known regularity argument (cf. \cite[\S 6]{Rabinowitz}), every weak solution of this equation is in fact a classical solution and we obtain 

\begin{align*}
\ker(L^\mathbb{C}_\lambda+is\,K_0)\subset C^1(S^1,\mathbb{C}^{2n})\subset W.
\end{align*}
Accordingly, assumption \eqref{kernelcond} is satisfied as well and we conclude from Theorem \ref{theorem:reduction} that

\begin{align*}
\mu_{Morse}(\mathcal{H})=\mu_{spec}(\mathcal{H})\in K^{-1}(X,Y).
\end{align*}


\subsubsection*{Part II: $\mu_{spec}(\mathcal{H})=\mu_{mon}(\mathcal{H})$}
The second part of our proof of Theorem \ref{main} is less direct than the first one.\\
We will use throughout the paths of matrices $\Psi_z:[0,2\pi]\rightarrow \GL(2n,\mathbb{C})$, $z=(\lambda,s)\in X\times\mathbb{R}$, which we introduced in \eqref{ode}.

\begin{lemma}
The space

\begin{align*}
\mathcal{E}=\{\Psi^{-1}_z(\cdot)u:\,u\in W,\, z=(\lambda,s)\in X\times\mathbb{R}\}
\end{align*}
is a Hilbert subbundle of the product $(X\times\mathbb{R})\times H^1([0,2\pi],\mathbb{C}^{2n})$.
\end{lemma}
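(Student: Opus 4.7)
The plan is to exhibit an explicit global trivialization
\[
\Phi:(X\times\mathbb{R})\times W\longrightarrow\mathcal{E},\qquad \Phi(z,u)(t)=\Psi_{z}^{-1}(t)\,u(t),
\]
between the trivial bundle with model fibre $W=H^{1}(S^{1},\mathbb{C}^{2n})$ and $\mathcal{E}$. By construction $\Phi$ is fibrewise linear and by the very definition of $\mathcal{E}_{z}$ it is surjective onto the fibre; it is fibrewise injective because each $\Psi_{z}^{-1}(t)$ is an invertible $2n\times 2n$ matrix.

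First I would establish the continuity of $z\mapsto\Psi_{z}^{\pm 1}$ in the $C^{1}([0,2\pi],M(2n,\mathbb{C}))$-topology. The coefficient matrix of \eqref{ode} is jointly continuous in $(z,t)\in (X\times\mathbb{R})\times[0,2\pi]$, so the standard continuous dependence of solutions of linear ODEs on parameters, together with a Gronwall argument applied to the differentiated equation, makes $z\mapsto\Psi_{z}$ continuous into $C^{1}([0,2\pi],\GL(2n,\mathbb{C}))$; the continuity of matrix inversion then yields the same for $z\mapsto\Psi_{z}^{-1}$.

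Next I would use the elementary fact that pointwise multiplication by a matrix $M\in C^{1}([0,2\pi],M(2n,\mathbb{C}))$ defines a bounded operator on $H^{1}([0,2\pi],\mathbb{C}^{2n})$ whose operator norm is controlled by $\|M\|_{C^{1}}$. Combined with the preceding step this gives norm-continuity of
\[
X\times\mathbb{R}\longrightarrow\mathcal{L}\bigl(W,H^{1}([0,2\pi],\mathbb{C}^{2n})\bigr),\qquad z\longmapsto\Psi_{z}^{-1}(\cdot),
\]
whence $\Phi$ is a continuous bundle morphism into the ambient product. Moreover, multiplication by $\Psi_{z}(\cdot)$ is bounded on $H^{1}([0,2\pi],\mathbb{C}^{2n})$ and provides a two-sided inverse for $\Phi_{z}$ on its image, so $\mathcal{E}_{z}$ is a closed subspace of $H^{1}([0,2\pi],\mathbb{C}^{2n})$ and $\Phi_{z}$ is a linear homeomorphism onto it.

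To conclude, I would check that the inverse trivialization $\Phi^{-1}:\mathcal{E}\to(X\times\mathbb{R})\times W$, $(z,v)\mapsto(z,\Psi_{z}(\cdot)v)$, is continuous; this follows by the same norm-continuity argument applied this time to $z\mapsto\Psi_{z}(\cdot)$, noting that the image of $\Phi^{-1}(z,\cdot)$ automatically lies in $W$, whose topology coincides with that inherited from $H^{1}([0,2\pi],\mathbb{C}^{2n})$ as a closed subspace. The only step requiring any work beyond bookkeeping is the $C^{1}$ (rather than merely $C^{0}$) continuity of $z\mapsto\Psi_{z}$, which is the point where $H^{1}$-multiplication forces us to control not just the solution but also its derivative.
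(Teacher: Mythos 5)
Your proposal is correct and follows essentially the same route as the paper: there the map $\varphi_z u=\Psi_z^{-1}(\cdot)u$ is viewed as a bundle automorphism of the full product $(X\times\mathbb{R})\times H^1([0,2\pi],\mathbb{C}^{2n})$ and $\mathcal{E}$ is obtained as the image of the constant subbundle $(X\times\mathbb{R})\times W$, which is exactly your trivialization in disguise. You merely make explicit the continuity ingredients (continuous dependence of $\Psi_z^{\pm1}$ in $C^1$ and boundedness of multiplication on $H^1$) that the paper leaves implicit, which is fine.
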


\begin{proof}
We have a bundle isomorphism

\begin{align*}
\varphi:(X\times\mathbb{R})\times H^1([0,2\pi],\mathbb{C}^{2n})\rightarrow (X\times\mathbb{R})\times H^1([0,2\pi],\mathbb{C}^{2n}),\quad (\varphi_zu)(t)=\Psi^{-1}_z(t)\,u(t),
\end{align*}
and $W\subset H^1([0,2\pi],\mathbb{C}^{2n})$ is a closed subspace. Hence

\begin{align*}
\mathcal{E}=\varphi((X\times\mathbb{R})\times W)\subset (X\times\mathbb{R})\times H^1([0,2\pi],\mathbb{C}^{2n}) 
\end{align*}
is a Hilbert subbundle.
\end{proof}

We now define isomorphisms

\begin{align*}
M:&\,\mathcal{E}\rightarrow(X\times\mathbb{R})\times W,\quad (M_zu)(t)=\Psi_z(t)\,u(t),\\
M^T:&\,(X\times\mathbb{R})\times H\rightarrow(X\times\mathbb{R})\times H,\quad (M^T_zu)(t)=\Psi^T_z(t)\,u(t),
\end{align*}
and consider the compositions

\begin{align*}
T_z:=M^T_z\circ\overline{\mathcal{A}^\mathbb{C}}_{z}\circ M_z:\mathcal{E}_z\rightarrow H,\quad z=(\lambda,s)\in X\times\mathbb{R}.
\end{align*}
Then $T:\mathcal{E}\rightarrow (X\times\mathbb{R})\times H$ is a Fredholm morphism between Hilbert bundles and we obtain from the properties i) and iii) in Section \ref{section:indbundle}

\begin{align*}
\ind(T)=\ind(M^T\circ\overline{\mathcal{A}^\mathbb{C}}\circ M)=\ind(M^T)+\ind(\overline{\mathcal{A}^\mathbb{C}})+\ind(M)=\sind(\mathcal{A}^\mathbb{C}).
\end{align*}
Our aim is now to compute $\ind(T)$. At first, we obtain for $u\in\mathcal{E}_z$, $z=(\lambda,s)\in X\times\mathbb{R}$,

\begin{align*}
(T_zu)(t)&=(M^T_z(\mathcal{A}^\mathbb{C}_\lambda+is\,I_{H})M_zu)(t)=\Psi^T_z(\mathcal{A}^\mathbb{C}_\lambda+is\,I_{H})(\Psi_zu)(t)\\
&=\Psi^T_z(t)(J\Psi'_z(t)u(t)+J\Psi_z(t)u'(t)+S_\lambda(t)\Psi_z(t)u(t)+is\Psi_z(t)u(t))\\
&=\Psi^T_z(t)(-S_\lambda(t)\Psi_z(t)u(t)-is\Psi_z(t)u(t)+J\Psi_z(t)u'(t)+S_\lambda(t)\Psi_z(t)u(t)+is\Psi_z(t)u(t))\\
&=\Psi^T_z(t)J\Psi_z(t)u'(t)=Ju'(t),
\end{align*}
where the last equality is easily seen by differentiating $\Psi^T_z(t)J\Psi_z(t)$ with respect to $t$.\\
We have a decomposition $H=Y_1\oplus Y_2$, where $Y_1$ denotes the space of constant functions and

\begin{align*}
Y_2=\left\{y\in L^2(S^1,\mathbb{C}^{2n}):\,\int^{2\pi}_{0}{y(t)\, dt}=0\right\}.
\end{align*} 
Let $y\in Y_2$ be given. The function $w(t)=-J\int^t_0{y(s)\,ds}$, $t\in[0,2\pi]$, belongs to $W$. From $w(0)=w(2\pi)=0$ we see moreover that $w\in\mathcal{E}_z$ for all $z\in X\times\mathbb{R}$ and $T_zw=y$. Hence $Y_1$ is transverse to the image of the Fredholm morphism $T$ as in \eqref{transversality} and we obtain

\begin{align*}
\ind(T)=[E(T,Y_1),\Theta(Y_1),T\mid_{E(T,Y_1)}]\in K^{-1}(X,Y).
\end{align*}
Now we have

\begin{align*}
E(T,Y_1)_z&=\{u\in\mathcal{E}_z:T_zu\in Y_1\}=\{u\in\mathcal{E}_z:T_zu\,\,\text{constant}\,\}\\
&=\{(2\pi-t)a+tb\in\mathcal{E}_z:\,a,b\in\mathbb{C}^{2n}\}.
\end{align*} 
By definition of $\mathcal{E}_z$, we conclude that $(2\pi-t)a+tb\in\mathcal{E}_z$ if and only if $(2\pi-t)a+tb=\Psi(t)^{-1}u(t)$, $t\in[0,2\pi]$, for some function $u\in W$. From the periodicity of $u$, it follows that $b=\Psi_z(2\pi)^{-1}a$, and hence

\begin{align*}
E(T,Y_1)_z=\{(2\pi-t)a+t\Psi_z(2\pi)^{-1}a:\,a\in\mathbb{C}^{2n}\}.
\end{align*} 
Now we define bundle isomorphisms

\begin{align*}
&E(T,Y_1)\rightarrow\Theta(\mathbb{C}^{2n}),\quad u\mapsto\frac{1}{2\pi} u(0),\\
&\Theta(Y_1)\rightarrow\Theta(\mathbb{C}^{2n}),\quad v\mapsto v(0),
\end{align*}
and a bundle morphism

\begin{align*}
N:\Theta(\mathbb{C}^{2n})\rightarrow\Theta(\mathbb{C}^{2n}),\quad N_za=J(\Psi_z(2\pi)^{-1}-I_{2n})a.
\end{align*}
We obtain a commutative diagram

\begin{align*}
\xymatrix{
E(T,Y_1)\ar[r]^T\ar[d]&\Theta(Y_1)\ar[d]\\
\Theta(\mathbb{C}^{2n})\ar[r]^N&\Theta(\mathbb{C}^{2n})
}
\end{align*}
and conclude from the definition of the $K$-theory groups that 

\begin{align*}
\ind(T)=[\Theta(\mathbb{C}^{2n}),\Theta(\mathbb{C}^{2n}),N]\in K^{-1}(X,Y).
\end{align*}
Finally, note that

\begin{align*}
N_z&=J(\Psi_z(2\pi)^{-1}-I_{2n})=J(-J\Psi_z(2\pi)^TJ-I_{2n})=\Psi^T_z(2\pi)J-J=(\Psi_z(2\pi)-I_{2n})^TJ
\end{align*}
and, by deforming $J$ inside $\GL(2n,\mathbb{C})$ to $-I_{2n}$, we obtain from Lemma \ref{homotopyII} the second assertion of Theorem \ref{main}.


\subsection{Proof of Theorem \ref{theorem:bifurcation}}

In this final part of Section \ref{section:proofs} we prove Theorem \ref{theorem:bifurcation}. Throughout the proof, $I=[a,b]$ denotes a compact interval in $\mathbb{R}$. At first we introduce a bifurcation theorem for continuous paths of $C^2$ functionals of Fredholm type (cf. \cite[Thm. 2.1]{JacBifIch}).

\begin{theorem}\label{SFLPejsachowiczIIbif}
Let $H$ be a separable Hilbert space and $\psi:I\times H\rightarrow\mathbb{R}$ a continuous map such that each $\psi_\lambda=\psi(\lambda,\cdot):H\rightarrow\mathbb{R}$, $\lambda\in I$, is $C^2$ and its derivatives depend continuously on $(\lambda,u)\in I\times H$. Assume that $0$ is a critical point of each $\psi_\lambda$ and that the corresponding Hessians $L_\lambda$ are Fredholm with $L_a$ and $L_b$ invertible. If $\sfl(L)\neq 0$, then the interval $(a,b)$ contains at least one point of bifurcation of critical points of $\psi$ from the trivial branch $I\times\{0\}\subset I\times H$.   
\end{theorem}
Note that, if we apply Theorem \ref{SFLPejsachowiczIIbif} to the functionals $\psi_\lambda$ from Section \ref{section:indthm} in the case that $(X,Y)=(I,\partial I)$, we obtain bifurcation of weak solutions of the Hamiltonian systems \eqref{equation} from the branch $I\times\{0\}\subset I\times H^\frac{1}{2}(S^1,\mathbb{R}^{2n})$ provided that the spectral flow of the path $L$ in \eqref{Lformula2} is non-zero.\\  
For the proof of Theorem \ref{theorem:bifurcation} let now $X$ be a path-connected topological space and $\emptyset\neq Y\subset X$ closed such that $L_\lambda$ is invertible for all $\lambda\in Y$. By the implicit function theorem, bifurcation can occur only at points $\lambda\in X$ where $L_\lambda$ is non-invertible and consequently $Y$ is contained in $X\setminus B(\mathcal{H})$.\\
We fix a point $\lambda_0\in Y$ and let  $\tilde{\gamma}_1,\tilde{\gamma}_2$ and $\tilde{\gamma}_3$ be three paths such that $\tilde{\gamma}_1(a)=\tilde{\gamma}_3(a)=\lambda_0$, $\tilde{\gamma}_1(b)=\tilde{\gamma}_2(a)\in Y$ and $\tilde{\gamma}_3(b)=\tilde{\gamma}_2(b)\in Y$. This yields a path of operators $B:I\rightarrow\mathcal{FS}(H^\frac{1}{2}(S^1,\mathbb{R}^{2n}))$ by $B_\lambda=(L\circ(\tilde{\gamma}_1\ast\tilde{\gamma}_2\ast\tilde{\gamma}^{-1}_3))_\lambda$, where $\tilde{\gamma}^{-1}_{3}(t)=\tilde{\gamma}_3(a+b-t)$, $t\in I$, denotes the inverse path of $\tilde{\gamma}_3$. Since $B$ is closed, we conclude from Lemma \ref{comppert} and Lemma \ref{lemma:propertiessfl} iii) that $\sfl(B)=0$. From this and the first two assertions of Lemma \ref{lemma:propertiessfl}, it follows that

\begin{align}\label{Bifpaths}
\sfl(L\circ\tilde{\gamma}_2)=\sfl(L\circ\tilde{\gamma}_3)-\sfl(L\circ\tilde{\gamma}_1).
\end{align}
Let now $C$ be a path component of $X\setminus B(\mathcal{H})$ such that $C\cap Y\neq\emptyset$. We assign an integer to $C$ by $\iota(C)=\sfl(L\circ\tilde{\gamma}_1)$, where $\tilde{\gamma}_1$ is any path which connects $\lambda_0$ and some point $\lambda_1\in C\cap Y$. In order to show that $\iota(C)$ is well defined, let $\tilde{\gamma}_3$ be another path connecting $\lambda_0$ and some point $\lambda_2\in C\cap Y$. We join $\lambda_1$ and $\lambda_2$ by a path $\tilde{\gamma}_2$ in $C$ and consider the continuous path of $C^2$ functionals 

\[\tilde\psi:I\times H^\frac{1}{2}(S^1,\mathbb{R}^{2n})\rightarrow\mathbb{R},\quad \tilde\psi(\lambda,u)=\psi(\tilde{\gamma}_2(\lambda),u)\] 
The critical points of $\tilde\psi$ are the weak solutions of the Hamiltonian systems

\begin{equation}\label{hambifequ}
\left\{
\begin{aligned}
Ju'(t)+\nabla_u\mathcal{H}&(\tilde{\gamma}_2(\lambda),t,u(t))=0,\quad \lambda\in I,\\
u(0)&=u(2\pi),
\end{aligned}
\right.
\end{equation}
and the Hessians at $0\in H^\frac{1}{2}(S^1,\mathbb{R}^{2n})$ are given by the operators $L\circ\tilde{\gamma}_2$. We conclude from Theorem \ref{SFLPejsachowiczIIbif} that $\sfl(L\circ\tilde{\gamma}_2)=0$, because otherwise there is a bifurcation point of \eqref{hambifequ} and consequently a bifurcation point for \eqref{equation} on $\tilde{\gamma}_2(I)\subset C\subset X\setminus B(\mathcal{H})$. Therefore, we see by \eqref{Bifpaths} that $\iota(C)$ is well defined.\\
Now, given any path $\gamma:(I,\partial I)\rightarrow (X,Y)$, it follows from Theorem \ref{mainpath} and \eqref{Bifpaths} that

\[d(\gamma)=\sfl(L\circ\gamma)=\iota(C_{\gamma(b)})-\iota(C_{\gamma(a)}),\] 
where we denote by $C_\lambda$ the component of $X\setminus B(\mathcal{H})$ to which $\lambda\in Y$ belongs. This proves Theorem \ref{theorem:bifurcation}, since if $d(\gamma)\neq 0$ for some $\gamma:(I,\partial I)\rightarrow (X,Y)$, it may be concluded that $X\setminus B(\mathcal{H})$ has at least two path components. Moreover, if there exists a sequence $\gamma_n:(I,\partial I)\rightarrow (X,Y)$ such that $|d(\gamma_n)|\rightarrow\infty$, it follows that that the number of path components of $X\setminus B(\mathcal{H})$ cannot be finite.


\appendix

\section{K-theory}
The aim of this appendix is to recall briefly the definition and main properties of topological $K$-theory of locally compact spaces. Our main references are \cite{Segal} and \cite{Lawson}.\\
Let $X$ be a locally compact topological space. We consider triples $\{E_0,E_1,a\}$, where $E_0$ and $E_1$ are complex vector bundles over $X$ and $a:E_0\rightarrow E_1$ is a bundle morphism. The \textit{support} $\supp\xi$ of such a triple $\xi=\{E_0,E_1,a\}$ is defined to be the closed subset of $X$ consisting of those points $\lambda\in X$ for which $a_\lambda:E_{0,\lambda}\rightarrow E_{1,\lambda}$ is not an isomorphism. $\xi$ is said to be \textit{trivial} if $\supp\xi=\emptyset$. We call $\{E^0_0,E^0_1,a_0\}$ and $\{E^1_0,E^1_1,a_1\}$ \textit{isomorphic}, if there exist bundle isomorphisms $\varphi_0:E^0_0\rightarrow E^1_0$ and $\varphi_1:E^0_1\rightarrow E^1_1$ such that the diagram

\begin{align*}
\xymatrix{E^1_0\ar[r]^{a_1}&E^1_1\\
E^0_0\ar[u]^{\varphi_0}\ar[r]^{a_0}&E^0_1\ar[u]_{\varphi_1}
}
\end{align*}  
commutes.\\
For a closed subspace $Y\subset X$, we denote by $L(X,Y)$ the set of isomorphism classes of all triples $\xi=\{E_0,E_1,a\}$ on $X$ such that $\supp\xi$ is a compact subset of $X\setminus Y$. Note that $L(X,Y)$ is a semigroup under the operation of direct sum and

\[\supp \xi^0\oplus \xi^1=\supp \xi^0\cup\supp \xi^1,\quad \xi^0,\xi^1\in L(X,Y).\]
We call $\xi^0=\{E^0_0,E^0_1,a_0\}$ and $\xi^1=\{E^1_0,E^1_1,a_1\}$ \textit{homotopic}, $\xi^0\simeq \xi^1$, if there is an element in $L(X\times[0,1],Y\times[0,1])$ such that its restrictions to $X\times\{0\}$ and $X\times\{1\}$ are isomorphic to $\xi^0$ and $\xi^1$, respectively. Finally, we introduce an equivalence relation $\sim$ on $L(X,Y)$ by $\xi^0\sim \xi^1$ if there are trivial elements $\eta^0,\eta^1\in L(X,Y)$ such that 

\[\xi^0\oplus \eta^0\simeq \xi^1\oplus \eta^1.\]
The \textit{K-theory} $K(X,Y)$ of the pair $(X,Y)$ is defined as the set of equivalence classes of $L(X,Y)$ with respect to this equivalence relation. Henceforth we denote the class of $\{E_0,E_1,a\}\in L(X,Y)$ in $K(X,Y)$ by $[E_0,E_1,a]$. The sum operation

\[[E^0_0,E^0_1,a_0]+[E^1_0,E^1_1,a_1]=[E^0_0\oplus E^1_0, E^0_1\oplus E^1_1, a_0\oplus a_1]\in K(X,Y)\]
turns $K(X,Y)$ into an abelian group, where the neutral element is represented by any trivial element in $L(X,Y)$.\\
For a proper map $f:(X,Y)\rightarrow (X',Y')$ of topological pairs, we obtain a homomorphism

\[f^\ast:K(X',Y')\rightarrow K(X,Y),\quad f^\ast[E_0,E_1,a]=[f^\ast E_0,f^\ast E_1,f^\ast a],\]
where $f^\ast E_0$, $f^\ast E_1$, denote the pullback bundles and $(f^\ast a)_\lambda=a_{f(\lambda)}$, $\lambda\in X$. If $g:(X,Y)\rightarrow (X',Y')$ and $f:(X',Y')\rightarrow (\tilde{X},\tilde{Y})$ are proper, then

\[(f\circ g)^\ast=g^\ast\circ f^\ast:K(\tilde{X},\tilde{Y})\rightarrow K(X,Y).\]
Moreover, $f^\ast=g^\ast:K(X',Y')\rightarrow K(X,Y)$ if $f\simeq g:(X,Y)\rightarrow(X',Y')$ are properly homotopic maps. Consequently, $K$ is a contravariant functor from the category of pairs of locally compact spaces and closed subspaces to the category of abelian groups. A slightly different homotopy invariance property, which is often useful in computations, reads as follows:

\begin{lemma}\label{homotopyII}
Let $E_0$ and $E_1$ be vector bundles over $X$ and $a:[0,1]\rightarrow \hom(E_0,E_1)$, a path of bundle morphisms. Set $\xi_t:=\{E_0,E_1,a_t\}$ and assume that $\supp\xi_t\subset K\subset X\setminus Y$, $t\in [0,1]$, for some compact set $K\subset X$. Then

\[[E_0,E_1,a_0]=[E_0,E_1,a_1]\in K(X,Y).\] 

\end{lemma}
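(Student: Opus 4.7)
The plan is to produce a direct homotopy witness in $L(X\times[0,1],Y\times[0,1])$ whose restrictions to $X\times\{0\}$ and $X\times\{1\}$ recover $\xi_0$ and $\xi_1$ respectively, so that $\xi_0\simeq\xi_1$ by the very definition of the equivalence relation introduced before the definition of $K(X,Y)$, and hence the two classes coincide in $K(X,Y)$.

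Concretely, let $\pi:X\times[0,1]\to X$ denote the projection onto the first factor and consider the pullback bundles $\pi^\ast E_0$ and $\pi^\ast E_1$ over $X\times[0,1]$. The continuous path $a:[0,1]\to\hom(E_0,E_1)$ is precisely the data of a continuous bundle morphism $\tilde a:\pi^\ast E_0\to\pi^\ast E_1$ specified fibrewise by
\[
\tilde a_{(\lambda,t)}=(a_t)_\lambda,\qquad (\lambda,t)\in X\times[0,1].
\]
Set $\tilde\xi:=\{\pi^\ast E_0,\pi^\ast E_1,\tilde a\}$. By construction the support of $\tilde\xi$ is the set of $(\lambda,t)$ for which $(a_t)_\lambda$ fails to be an isomorphism, which is precisely
\[
\supp\tilde\xi=\bigcup_{t\in[0,1]}\supp\xi_t\times\{t\}\subset K\times[0,1].
\]
Since $K$ is compact and contained in $X\setminus Y$, the set $K\times[0,1]$ is a compact subset of $(X\times[0,1])\setminus(Y\times[0,1])$. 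Hence $\tilde\xi\in L(X\times[0,1],Y\times[0,1])$.

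By construction the restrictions of $\tilde\xi$ to $X\times\{0\}$ and $X\times\{1\}$ are canonically isomorphic to $\xi_0$ and $\xi_1$, so $\xi_0\simeq\xi_1$ in the sense defined just before the definition of $K(X,Y)$, and therefore $[E_0,E_1,a_0]=[E_0,E_1,a_1]\in K(X,Y)$. The only technical point worth noting is the equivalence between a continuous path in $\hom(E_0,E_1)$ and a continuous bundle morphism between the corresponding pullbacks, which is the identification of $\hom(\pi^\ast E_0,\pi^\ast E_1)$ with continuous $[0,1]$-families in $\hom(E_0,E_1)$; this is standard and poses no genuine obstacle.
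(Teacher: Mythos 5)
Your argument is correct. The paper states Lemma~\ref{homotopyII} in Appendix~A without proof, and your construction---pulling $E_0$, $E_1$ back along $\pi:X\times[0,1]\to X$, assembling the path $a$ into a bundle morphism $\tilde a$ on the pullbacks, observing $\supp\tilde\xi\subset K\times[0,1]$ so that the triple lies in $L(X\times[0,1],Y\times[0,1])$, and invoking the definition of homotopy and then of $\sim$ (with zero-rank trivial triples)---is exactly the standard witness one would supply here.
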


The following lemma states the so called \textit{logarithmic property} of $K$.

\begin{lemma}\label{logarithmic}
If $[E_0,E_1,a_0]$, $[E_1,E_2,a_1]\in K(X,Y)$, then their sum is given by

\[[E_0,E_1,a_0]+[E_1,E_2,a_1]=[E_0,E_2,a_1\circ a_0]\in K(X,Y).\]
\end{lemma}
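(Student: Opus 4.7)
The plan is to use the classical ``rotation trick'' from topological $K$-theory: I will deform $\mathrm{diag}(a_1\circ a_0,\mathrm{id}_{E_1})$ into an off-diagonal morphism that, up to a swap of factors and a sign rotation, agrees with $\mathrm{diag}(a_0,a_1)$. Compactly-supported homotopy invariance (Lemma \ref{homotopyII}) will then yield the desired identity.

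Concretely, for $t\in[0,\pi/2]$ I will consider the family of bundle morphisms
\[
\alpha_t=\begin{pmatrix}\cos t\cdot a_1\circ a_0 & -\sin t\cdot a_1\\ \sin t\cdot a_0 & \cos t\cdot\mathrm{id}_{E_1}\end{pmatrix}\colon E_0\oplus E_1\longrightarrow E_2\oplus E_1.
\]
The first step is to verify that $\alpha_{t,\lambda}$ is an isomorphism at every point $\lambda$ where both $a_{0,\lambda}$ and $a_{1,\lambda}$ are isomorphisms. This is a short linear-algebra computation: from $\alpha_{t,\lambda}(u,v)=0$, applying $a_{1,\lambda}$ to the second row and taking suitable linear combinations with the first row forces $a_{1,\lambda}(v)=0$ and $a_{1,\lambda}a_{0,\lambda}(u)=0$, so $u=v=0$. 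Hence the support of $\{E_0\oplus E_1,E_2\oplus E_1,\alpha_t\}$ remains, uniformly in $t$, inside the fixed compact set $\supp\{E_0,E_1,a_0\}\cup\supp\{E_1,E_2,a_1\}\subset X\setminus Y$, and Lemma \ref{homotopyII} gives
\[
[E_0\oplus E_1,E_2\oplus E_1,\alpha_0]=[E_0\oplus E_1,E_2\oplus E_1,\alpha_{\pi/2}]\in K(X,Y).
\]

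Next I will evaluate the two endpoints. At $t=0$, $\alpha_0=\mathrm{diag}(a_1\circ a_0,\mathrm{id}_{E_1})$ splits off the trivial triple $\{E_1,E_1,\mathrm{id}_{E_1}\}$, so its class equals $[E_0,E_2,a_1\circ a_0]$. At $t=\pi/2$, $\alpha_{\pi/2}$ is off-diagonal with entries $-a_1$ and $a_0$; composing on the left with the swap isomorphism $\sigma\colon E_2\oplus E_1\to E_1\oplus E_2$ yields an isomorphic triple (in the sense of Appendix A) and identifies this class with $[E_0\oplus E_1,E_1\oplus E_2,\mathrm{diag}(a_0,-a_1)]$. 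A final unit-norm rotation $s\mapsto\mathrm{diag}(a_0,e^{i\pi s}a_1)$, $s\in[0,1]$, has support contained in the same compact set and, together with the definition of the sum in $K(X,Y)$, reduces the right-hand side to $[E_0,E_1,a_0]+[E_1,E_2,a_1]$. Chaining the equalities proves the lemma.

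I expect no genuine obstacle. The only point requiring care is the invertibility check for $\alpha_t$, but the particular combination of $a_0$, $a_1$, their composition, and $\mathrm{id}_{E_1}$ in $\alpha_t$ is designed precisely so that the off-diagonal block behaves as a ``homotopy inverse'' to the diagonal block, making the check elementary and uniform in $t$. Everything else is bookkeeping with the definitions of addition and of isomorphic triples recalled in Appendix A.
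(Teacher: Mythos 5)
Your argument is correct. Note, however, that the paper itself does not prove Lemma~\ref{logarithmic}: it is stated in Appendix~A as a known fact, with the references \cite{Segal} and \cite{Lawson} given for the $K$-theory background. So there is no ``paper's own proof'' to compare against; what you have written is a complete, self-contained justification of the cited fact via the standard rotation trick.

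A few small checks that confirm your proposal goes through. The invertibility of $\alpha_{t,\lambda}$ at points where both $a_{0,\lambda}$ and $a_{1,\lambda}$ are isomorphisms is exactly as you say: applying $a_{1,\lambda}$ to the second row and forming the combinations $\cos t\cdot(\text{row 1})+\sin t\cdot a_{1,\lambda}(\text{row 2})$ and $-\sin t\cdot(\text{row 1})+\cos t\cdot a_{1,\lambda}(\text{row 2})$ yields $a_{1,\lambda}a_{0,\lambda}u=0$ and $a_{1,\lambda}v=0$, whence $u=v=0$; since the fibre dimensions of $E_0\oplus E_1$ and $E_2\oplus E_1$ agree at such points, $\alpha_{t,\lambda}$ is an isomorphism. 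This gives a uniform compact support for the homotopy, so Lemma~\ref{homotopyII} applies. The endpoint at $t=0$ splits off the trivial summand $\{E_1,E_1,\mathrm{id}_{E_1}\}$, and the endpoint at $t=\pi/2$, after composing with the swap isomorphism of the targets, is $\mathrm{diag}(a_0,-a_1)$. Your final homotopy $s\mapsto\mathrm{diag}(a_0,e^{i\pi s}a_1)$ is legitimate because the bundles in Appendix~A are complex, so multiplication by a unit complex scalar is a bundle automorphism with the same support; this removes the sign and, by the definition of addition of classes, gives $[E_0,E_1,a_0]+[E_1,E_2,a_1]$. Chaining the identities proves the lemma. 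No gap.
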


The \textit{odd $K$-theory groups} are defined by

\[K^{-1}(X,Y)=K(X\times\mathbb{R},Y\times\mathbb{R})\]
and, as above, any proper map $f:(X,Y)\rightarrow(X',Y')$ induces a homomorphism

\[f^\ast:K^{-1}(X',Y')\rightarrow K^{-1}(X,Y).\]
Finally, we want to recall the definition of the well known isomorphism $c_1:K^{-1}(I,\partial I)\rightarrow\mathbb{Z}$ coming from the first Chern number, where $I\subset\mathbb{R}$ is any compact interval. Let 

\[[E_0,E_1,a]\in K^{-1}(I,\partial I)=K(I\times\mathbb{R},\partial I\times\mathbb{R})\]
be given. Since $I\times\mathbb{R}$ is contractible, we can find global trivialisations $\psi: E_0\rightarrow(I\times\mathbb{R})\times\mathbb{C}^n$ and $\varphi:E_1\rightarrow(I\times\mathbb{R})\times\mathbb{C}^n$. Then

\begin{align}\label{Chern}
c_1([E_0,E_1,a])=w(\det(\varphi\circ a\circ\psi^{-1})\circ\kappa,0)\in\mathbb{Z},
\end{align}
where $\kappa:S^1\rightarrow I\times\mathbb{R}$ is any simple positively oriented curve surrounding the support of $\{E_0,E_1,a\}$, and $w(\cdot,0)$ denotes the winding number for closed curves in $\mathbb{C}\setminus\{0\}$ with respect to $0$.


\section{Spectral Flow}
The spectral flow of paths of selfadjoint Fredholm operators was introduced by Atiyah, Patodi and Singer in \cite{AtiyahPatodi} in connection with the eta invariant and spectral asymmetry. Since then it has reappeared in several other contexts like Floer homology, the distribution of eigenvalues of the Dirac operator, odd Chern characters, gauge anomalies and bifurcation theory, among others. Detailed references can be found in the introduction of \cite{SFLPejsachowicz}. We base our presentation essentially on the recent work \cite{UnbSpecFlow}.\\
We follow the notation of Section \ref{section:sindbundle} and assume that $W$ and $H$ are complex Hilbert spaces with a dense continuous inclusion $W\hookrightarrow H$. Let us recall that for a selfadjoint Fredholm operator $T_0\in\mathcal{FS}(W,H)$, $0\in\mathbb{R}$ either belongs to the resolvent set or it is an isolated eigenvalue of finite multiplicity. Since the set of Fredholm operators is open in $\mathcal{L}(W,H)$, there exists $\Lambda>0$ such that $\pm\Lambda$ do not belong to the spectrum 

\[\sigma(T_0)=\{\lambda\in\mathbb{R}:\,\lambda-T_0\notin\mathcal{GS}(W,H)\}\] 
of $T_0$ and $\sigma(T_0)\cap[-\Lambda,\Lambda]$ consists only of isolated eigenvalues of finite multiplicity. If now $\Gamma$ denotes the simple closed curve surrounding $0$ positively along the circle of radius $\Lambda$, then

\[\chi_{[-\Lambda,\Lambda]}(T_0)=\frac{1}{2\pi i}\int_\Gamma{(\lambda-T_0)^{-1}\,d\lambda},\]
is the orthogonal projection onto the direct sum of the eigenspaces of $T_0$ with respect to eigenvalues in $[-\Lambda,\Lambda]$. In what follows we set for $-\Lambda\leq c<d\leq\Lambda$

\[E_{[c,d]}(T_0)=\bigoplus_{\lambda\in[c,d]}\ker(\lambda-T_0).\]
From the continuity of spectra, it is readily seen that there exists a neighbourhood $N(T_0,\Lambda)\subset\mathcal{FS}(W,H)$ of $T_0$ such that $\pm\Lambda\notin\sigma(T)$ and the rank of the projection $\chi_{[-\Lambda,\Lambda]}(T)$ has the same finite value for all $T\in N(T_0,\Lambda)$.\\
Let now $\mathcal{A}:[a,b]\rightarrow\mathcal{FS}(W,H)$ be a path of selfadjoint Fredholm operators. We choose a subdivision $a=t_0<t_1<\ldots<t_N=b$, operators $T_i\in\mathcal{FS}(W,H)$ and numbers $\Lambda_i>0$, $i=1,\ldots N$, such that the restriction of the path $\mathcal{A}$ to $[t_{i-1},t_i]$ runs entirely inside $N(T_i,\Lambda_i)$. Accordingly, $\dim E_{[-\Lambda_i,\Lambda_i]}(\mathcal{A}_t)$ is constant for $t\in [t_{i-1},t_i]$, $i=1,\ldots,N$. The \textit{spectral flow} of $\mathcal{A}$ is defined by

\begin{align}\label{sfl}
\sfl(\mathcal{A})=\sum^N_{i=1}{\dim E_{[0,\Lambda_i]}(\mathcal{A}_{t_i})-\dim E_{[0,\Lambda_i]}(\mathcal{A}_{t_{i-1}})}\in\mathbb{Z}.
\end{align}  
Note that, roughly speaking, $\sfl(\mathcal{A})$ counts the number of negative eigenvalues of $\mathcal{A}_a$ that
become positive as the parameter $t$ travels from $a$ to $b$ minus the number of positive eigenvalues of $\mathcal{A}_a$ that become negative; i.e. the net number of eigenvalues which cross zero\\
There exist other definitions of the spectral flow under various assumptions in the literature. We want to mention here the work of Robbin, Salamon \cite{Robbin-Salamon} in the case $W\neq H$ and Fitzpatrick, Pejsachowicz, Recht \cite{SFLPejsachowicz} for $W=H$. The relation between those several existing definitions, which all have the same interpretation, was clarified by Lesch in \cite{LeschSpecFlowUniqu}. Let us denote by $\Omega(\mathcal{FS}(W,H),\mathcal{GS}(W,H))$ the set of all paths in $\mathcal{FS}(W,H)$ having ends in $\mathcal{GS}(W,H)$. By using Proposition 4.6 of \cite{Robbin-Salamon}, one may restate the uniqueness of the spectral flow as proved in \cite[Thm. 5.13]{LeschSpecFlowUniqu} as follows:

\begin{theorem}\label{Lesch}
Let

\begin{align*}
\mu:\Omega(\mathcal{FS}(W,H),\mathcal{GS}(W,H))\rightarrow\mathbb{Z}
\end{align*}
be a map such that

\begin{enumerate}
	\item[i)] $\mu(\mathcal{A}^1)=\mu(\mathcal{A}^2)$ if $\mathcal{A}^1,\mathcal{A}^2$ are homotopic inside $\Omega(\mathcal{FS}(W,H),\mathcal{GS}(W,H))$.
	\item[ii)] If $\mathcal{A}^1,\mathcal{A}^2\in\Omega(\mathcal{FS}(W,H),\mathcal{GS}(W,H))$, then
	\begin{align*}
	\mu(\mathcal{A}^1\oplus\mathcal{A}^2)=\mu(\mathcal{A}^1)+\mu(\mathcal{A}^2).
	\end{align*}
	\item[iii)] If $\mathcal{A}\in\Omega(\mathcal{FS}(W,H),\mathcal{GS}(W,H))$ is constant, then $\mu(\mathcal{A})=0$.
	\item[iv)] There is a rank one orthogonal projection $P$ with $\ker(P)\subset W$ such that for all $T\in\mathcal{S}(W,H)$ with $(I_H-P)T(I_H-P)\in\mathcal{GS}(\ker(P))$ the path 
	
	\[	\mathcal{A}_t=t\, P+(I_H-P)T(I_H-P),\quad t\in[-1,1],\]
	has $\mu(\mathcal{A})=1$.
\end{enumerate}
Then $\mu$ equals the spectral flow.
\end{theorem}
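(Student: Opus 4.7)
The goal is to show that any map $\mu$ satisfying axioms (i)--(iv) agrees with the integer-valued spectral flow defined in \eqref{sfl}. My plan is to exploit the very partition used in \eqref{sfl} in order to reduce an arbitrary path $\mathcal{A}\in\Omega(\mathcal{FS}(W,H),\mathcal{GS}(W,H))$ to a finite concatenation of one-dimensional elementary crossings of exactly the model type appearing in axiom (iv), so that $\mu$ and $\sfl$ are forced to take identical values.

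Following the construction of $\sfl$, I would first choose a subdivision $a=t_0<\ldots<t_N=b$, operators $T_i\in\mathcal{FS}(W,H)$ and numbers $\Lambda_i>0$ with $\mathcal{A}\big|_{[t_{i-1},t_i]}\subset N(T_i,\Lambda_i)$. On each subinterval the spectral projection $P_i(t)=\chi_{[-\Lambda_i,\Lambda_i]}(\mathcal{A}_t)$ has constant finite rank $k_i$, and Kato's theorem provides a continuous family of unitaries conjugating $P_i(t)$ to $P_i(t_{i-1})$. After this trivialization, $\mathcal{A}_t$ splits as $B^i_t\oplus C^i_t$ where $B^i$ is a path of $k_i\times k_i$ selfadjoint matrices and $C^i$ is a path in $\mathcal{FS}$ whose spectrum avoids $[-\Lambda_i,\Lambda_i]$; in particular $C^i$ is invertible throughout.

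Next, I would use (i) to perturb $\mathcal{A}$ slightly so that the interior values $\mathcal{A}_{t_i}$ become invertible and each $B^i$ only has simple, transverse crossings of $0$. The straight-line homotopy $s\mapsto(1-s)C^i_t+s\,\sgn(C^i_{t_{i-1}})\Lambda_i$ stays in $\mathcal{FS}$ and invertible, so by (i) and (iii) one obtains $\mu(C^i)=0$; axiom (ii) then yields $\mu(\mathcal{A}|_{[t_{i-1},t_i]})=\mu(B^i)$. Each simple transverse crossing in $B^i$ can, by (i), be deformed to (a time-reverse of) the normalized model $t\mapsto tP+(I_H-P)T(I_H-P)$ of (iv) after direct-summing with an appropriate fixed invertible block, contributing $+1$ or $-1$ according to whether the eigenvalue increases or decreases through zero. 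A path-additivity lemma deduced from (i) and (ii) (comparing $\mathcal{A}_1\oplus\mathcal{A}_2$ with the concatenation $\mathcal{A}_1\ast\mathcal{A}_2$ via a rotation homotopy) then glues the subintervals, producing
\[\mu(\mathcal{A})=\sum_{i=1}^{N}\bigl(\dim E_{[0,\Lambda_i]}(\mathcal{A}_{t_i})-\dim E_{[0,\Lambda_i]}(\mathcal{A}_{t_{i-1}})\bigr)=\sfl(\mathcal{A}).\]

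The main obstacle is that the operators $\mathcal{A}_{t_i}$ at the interior partition points are not automatically invertible, so the axioms cannot be applied piecewise as stated. This is precisely where Proposition 4.6 of Robbin--Salamon enters: it provides a normal form for selfadjoint Fredholm paths which is generic in the sense that the partition points are invertible and the crossings transverse, and this normalization is itself achieved by a homotopy in $\Omega(\mathcal{FS}(W,H),\mathcal{GS}(W,H))$, i.e.\ one that respects axiom (i). A secondary technical point is that the Kato trivialization $U_i(t)$ must act continuously on $W$ as well as on $H$, which rests on $\mathrm{im}\,P_i(t)\subset W$ (eigenvectors of a closed selfadjoint operator lie in its domain) and matches the hypothesis $\ker(P)\subset W$ used in axiom (iv) to make the normalized model a genuine path in $\mathcal{FS}(W,H)$.
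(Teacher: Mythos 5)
The paper does not actually prove Theorem~\ref{Lesch}: it is stated as a restatement of Lesch \cite[Thm.~5.13]{LeschSpecFlowUniqu}, with Robbin--Salamon \cite[Prop.~4.6]{Robbin-Salamon} invoked only to pass from Lesch's original hypotheses to the form given here. So there is no in-paper proof to compare against, and your argument has to be judged on its own.

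As a high-level outline, your sketch follows the standard strategy for such uniqueness theorems (localize with the partition from the definition of $\sfl$, trivialize the middle spectral band via Kato, strip off the invertible complement with (i) and (iii), reduce each crossing to the normalization (iv), and glue). You also correctly identify that Robbin--Salamon Prop.~4.6 is what makes the interior partition points invertible and the crossings transverse. However, there are genuine gaps. The most important one is that you silently use concatenation additivity, $\mu(\mathcal{A}^1\ast\mathcal{A}^2)=\mu(\mathcal{A}^1)+\mu(\mathcal{A}^2)$, which is \emph{not} among the axioms (i)--(iv) — axiom (ii) is direct-sum additivity, not path-concatenation additivity. You gesture at deriving it ``from (i) and (ii)\ldots via a rotation homotopy,'' but this is exactly the delicate point: the naive homotopy from $(\mathcal{A}^1\ast\mathcal{A}^2)\oplus c$ to a reparametrized $\mathcal{A}^1\oplus\mathcal{A}^2$ must be carried out so that the endpoint operators stay invertible throughout, and you would need to write down the reparametrizing homotopy explicitly and verify it stays inside $\Omega(\mathcal{FS},\mathcal{GS})$. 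Without this, the piecewise conclusion $\mu(\mathcal{A})=\sum_i\mu(\mathcal{A}|_{[t_{i-1},t_i]})$ is unjustified.

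Two further points need more care. First, the reduction of a single transverse crossing of $B^i$ to the \emph{exact} model $t\mapsto tP+(I_H-P)T(I_H-P)$ of axiom~(iv) (with the specific rank-one $P$ and with $\ker P\subset W$) requires an explicit homotopy, after adding a suitable constant invertible summand, that keeps the endpoints invertible and lands on the prescribed $P$; claiming it ``can, by (i), be deformed'' skips this. Second, the Kato unitary $U_i(t)$ built from $\chi_{[-\Lambda_i,\Lambda_i]}(\mathcal{A}_t)$ must be shown to restrict to a continuous family of isomorphisms of $W$ (not just of $H$) so that conjugation stays inside $\mathcal{FS}(W,H)$. Knowing $\mathrm{im}\,P_i(t)\subset W$ is necessary but not sufficient: one also needs the induced map on the orthogonal complement to preserve $W$ continuously in the $\|\cdot\|_W$ topology, which deserves a real argument in the unbounded/gap-topology setting. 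These are fillable, but as written they are gaps rather than remarks.
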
      

Besides the four characteristics stated in Theorem \ref{Lesch}, we want to mention three further properties in the following lemma.

\begin{lemma}\label{lemma:propertiessfl}
Let $\mathcal{A},\mathcal{A}^1,\mathcal{A}^2:[a,b]\rightarrow\mathcal{FS}(W,H)$ be three paths of selfadjoint Fredholm operators.
\begin{itemize}
	\item[i)] $\sfl(\mathcal{A}^1\ast\mathcal{A}^2)=\sfl(\mathcal{A}^1)+\sfl(\mathcal{A}^2)$, whenever the concatenation of the paths $\mathcal{A}^i$, $i=1,2$, is defined,
	\item[ii)]  $\sfl(\mathcal{A}')=-\sfl(\mathcal{A})$, where $\mathcal{A}'(t)=\mathcal{A}(a+b-t)$ denotes the inverse path of $\mathcal{A}$,
	\item[iii)] if there exists a path of selfadjoint operators $K:[a,b]\rightarrow\mathcal{S}(H)$ such that $\mathcal{A}^1_{t}-\mathcal{A}^2_{t}=K_t\mid_W\in\mathcal{K}(W,H)$, $t\in[a,b]$, and $K_a=K_b=0$, then
	
	\[\sfl(\mathcal{A}^1)=\sfl(\mathcal{A}^2).\]
\end{itemize}
\end{lemma}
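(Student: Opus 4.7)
The plan is to deduce i) and ii) directly from the partition-based definition \eqref{sfl}, and to reduce iii) to a homotopy invariance statement via a straight-line deformation.

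For i), pick admissible subdivisions $a = t_0 < \cdots < t_N = b$ for $\mathcal{A}^1$ and $b = s_0 < \cdots < s_M = c$ for $\mathcal{A}^2$, together with associated $T_i, \Lambda_i$ and $T'_j, \Lambda'_j$ as in \eqref{sfl}. Their union is admissible for the concatenation $\mathcal{A}^1 \ast \mathcal{A}^2$, and the defining formula for $\sfl(\mathcal{A}^1 \ast \mathcal{A}^2)$ then splits as the sum of the two individual sums, which is precisely $\sfl(\mathcal{A}^1) + \sfl(\mathcal{A}^2)$. For ii), the subdivision $a = a+b - t_N < \cdots < a + b - t_0 = b$ together with the same data $T_i, \Lambda_i$ is admissible for $\mathcal{A}'$, and the telescoping sum \eqref{sfl} changes sign when the direction is reversed, giving $\sfl(\mathcal{A}') = -\sfl(\mathcal{A})$.

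For iii), I would introduce the straight-line homotopy
\[
h:[0,1] \times [a,b] \to \mathcal{FS}(W,H), \qquad h(s,t) = \mathcal{A}^2_t + s\,K_t\mid_W.
\]
This is continuous, and takes values in $\mathcal{FS}(W,H)$ because $K_t\mid_W \in \mathcal{K}(W,H)$ (as a composition of the bounded inclusion $W \hookrightarrow H$ with the compact operator $K_t:H \to H$), so that $h(s,t)$ is a selfadjoint compact perturbation of the Fredholm operator $\mathcal{A}^2_t$. The hypothesis $K_a = K_b = 0$ forces $h(s,a) = \mathcal{A}^2_a$ and $h(s,b) = \mathcal{A}^2_b$ for every $s \in [0,1]$, so the homotopy has fixed endpoints. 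I would then invoke the homotopy invariance of the spectral flow under such fixed-endpoint homotopies: by compactness of $[0,1]$ and continuity of $h$, one can choose a single subdivision $a = t_0 < \cdots < t_N = b$ and operators $T_i \in \mathcal{FS}(W,H)$ with numbers $\Lambda_i > 0$ such that each restriction of $h_s$ to $[t_{i-1}, t_i]$ runs inside $N(T_i, \Lambda_i)$ for every $s \in [0,1]$. On such a uniform subdivision the function $s \mapsto \dim E_{[0,\Lambda_i]}(h_s(t_i))$ is locally constant on $[0,1]$, hence constant; summing over $i$ as in \eqref{sfl} yields $\sfl(h_0) = \sfl(h_1)$, i.e. $\sfl(\mathcal{A}^2) = \sfl(\mathcal{A}^1)$.

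The main obstacle will be the homotopy invariance step used in iii), since Theorem \ref{Lesch} characterises the spectral flow only on paths with endpoints in $\mathcal{GS}(W,H)$, while the present lemma makes no invertibility assumption on $\mathcal{A}^1_a, \mathcal{A}^1_b$. The required invariance is therefore not directly supplied by Theorem \ref{Lesch} but is a refinement to homotopies that keep the endpoint operators \emph{fixed} (not merely invertible). It has to be established by working with the partition-based definition \eqref{sfl} as sketched above, the key point being that, since $h_s(a)$ and $h_s(b)$ are independent of $s$, the relevant spectral subspaces at the endpoints do not vary with $s$ at all.
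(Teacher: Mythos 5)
Your handling of i) and ii) is exactly what the paper intends (its proof is the one-line remark that these follow from \eqref{sfl}), and your reduction of iii) to invariance of $\sfl$ under the straight-line homotopy $h(s,t)=\mathcal{A}^2_t+s\,K_t\mid_W$, which has fixed endpoints because $K_a=K_b=0$, is also the idea behind the paper's appeal to homotopy invariance. Your point that the invariance literally stated in Theorem~\ref{Lesch} concerns homotopies inside $\Omega(\mathcal{FS}(W,H),\mathcal{GS}(W,H))$, whereas here the endpoints need not be invertible, is fair; what is really being used is the fixed-endpoint homotopy invariance of the spectral flow, which is part of the background from \cite{UnbSpecFlow}.

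The gap is in the step by which you try to establish that invariance: the claim that $s\mapsto\dim E_{[0,\Lambda_i]}(h_s(t_i))$ is locally constant is false. Membership of $h_s(t_i)$ in $N(T_i,\Lambda_i)$ only guarantees that $\pm\Lambda_i\notin\sigma(h_s(t_i))$ and that the rank of $\chi_{[-\Lambda_i,\Lambda_i]}(h_s(t_i))$ is constant; eigenvalues inside $(-\Lambda_i,\Lambda_i)$ are free to cross $0$ as $s$ varies, and precisely such crossings change $\dim E_{[0,\Lambda_i]}$. Indeed, if your claim were correct, the same reasoning applied in the $t$-variable would show that any path contained in a single neighbourhood $N(T,\Lambda)$ has vanishing spectral flow, which contradicts the very definition \eqref{sfl}. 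So the individual terms of \eqref{sfl} do jump with $s$; what makes the conclusion true is that the jumps cancel, and to see this one should regroup the telescoping sum by subdivision points rather than by intervals. The terms attached to $t_0=a$ and $t_N=b$ are independent of $s$ because $h_s(a)$ and $h_s(b)$ are fixed, while at each interior point $t_i$ the combined contribution $\dim E_{[0,\Lambda_i]}(h_s(t_i))-\dim E_{[0,\Lambda_{i+1}]}(h_s(t_i))$ is (up to sign) the number of eigenvalues of $h_s(t_i)$ lying between $\Lambda_{i+1}$ and $\Lambda_i$; since neither $\Lambda_i$ nor $\Lambda_{i+1}$ ever belongs to $\sigma(h_s(t_i))$ and the spectrum there consists of finitely many eigenvalues of finite multiplicity, this count is locally constant in $s$ by continuity of the associated spectral projections. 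Hence $s\mapsto\sfl(h_s)$ is locally constant, and iii) follows. (A further small repair: a single subdivision with neighbourhoods $N(T_i,\Lambda_i)$ valid for all $s\in[0,1]$ simultaneously need not exist; but it suffices to choose the subdivision locally in $s$, since local constancy in $s$ is all the argument requires.) Alternatively, one may simply quote the fixed-endpoint homotopy invariance from \cite{UnbSpecFlow}, which is in effect what the paper does.
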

Note that i) and ii) follow immediately from \eqref{sfl}, whereas iii) is a simple consequence of the homotopy invariance stated in Theorem \ref{Lesch}.\\
Finally, we want to point out that one can define the spectral flow for paths of selfadjoint Fredholm operators acting on real Hilbert spaces verbatim by \eqref{sfl}. On the other hand, if $W$ and $H$ are real Hilbert spaces and $T\in\mathcal{FS}(W,H)$ is a selfadjoint Fredholm operator, then the complexification $T^\mathbb{C}$ acting between the complexified spaces $W^\mathbb{C}$ and $H^\mathbb{C}$ defines an element of $\mathcal{FS}(W^\mathbb{C},H^\mathbb{C})$ (cf. \cite{Weidmann}). Moreover, $\sigma(T)=\sigma(T^\mathbb{C})$ and $\dim E_{[a,b]}(T)=\dim E_{[a,b]}(T^\mathbb{C})$ for every compact interval $[a,b]$ such that $[a,b]\cap\sigma(T)$ consists only of isolated eigenvalues of finite multiplicity and $a,b\notin\sigma(T)$. Now given a path $\mathcal{A}:[a,b]\rightarrow\mathcal{FS}(W,H)$, it follows from the definition in \eqref{sfl} that 

\begin{align}\label{invsfl}
\sfl(\mathcal{A})=\sfl(\mathcal{A}^\mathbb{C}),
\end{align}
where $\mathcal{A}^\mathbb{C}:[a,b]\rightarrow \mathcal{FS}(W^\mathbb{C},H^\mathbb{C})$ denotes the path of complexified operators.


\subsubsection*{Acknowledgements}
This paper was written while the author enjoyed the kind hospitality of the Dipartimento di Scienze Matematiche "Giuseppe Luigi Lagrange" at the Politecnico di Torino in Italy. Special thanks go to "the girls": Teresa Fischer and Bruna Gianotti. Mi mancherete!

\thebibliography{9}

\bibitem{AtiyahSinger} M.F. Atiyah, I.M. Singer, \textbf{Index Theory for Skew-Adjoint Fredholm Operators}, Inst. Hautes Etudes Sci. Publ. Math. \textbf{37}, 1969, 5--26 

\bibitem{AtiyahSingerFam} M.F. Atiyah, I.M. Singer, \textbf{The Index of Elliptic Operators IV}, Ann. Math. \textbf{93}, 1971, 119--138 

\bibitem{AtiyahPatodi} M.F. Atiyah, V.K. Patodi, I.M. Singer, \textbf{Spectral Asymmetry and Riemannian Geometry III}, Proc. Cambridge Philos. Soc. \textbf{79}, 1976, 71--99

\bibitem{Bartsch} T. Bartsch, A. Szulkin, \textbf{Hamiltonian systems: periodic and homoclinic solutions by variational methods}, Handbook of Differential Equations - Ordinary Differential Equations, Vol. 2, 2005, 77--146 

\bibitem{UnbSpecFlow} B. Boo{\ss}-Bavnbek, M. Lesch, J. Phillips, \textbf{Unbounded Fredholm Operators and Spectral Flow}, Canad. J. Math. \textbf{57}, 2005, 225--250

\bibitem{Cappell} S.E. Cappell, R. Lee, E. Miller, \textbf{On the Maslov Index}, Comm. Pure Appl. Math. \textbf{47}, 1994, 121--186

\bibitem{Chang} K.C. Chang, \textbf{Infinite Dimensional Morse Theory and Multiple Solution Problems}, Birkhäuser, Boston, 1993

\bibitem{Costa} D.G. Costa, C.A. Magalh{\~a}es, \textbf{A Unified Approach to a Class of Strongly Indefinite Functionals}, J. Differential Equations \textbf{125}, 1996, 521--547

\bibitem{Dimension} V.V. Fedorchuk, \textbf{The Fundamentals of Dimension Theory}, Encyclopaedia of Mathematical Sciences \textbf{17}, General Topology I, 1990, 91--202


\bibitem{SFLPejsachowicz} P.M. Fitzpatrick, J. Pejsachowicz, L. Recht, 
\textbf{Spectral Flow and Bifurcation of Critical Points of Strongly-Indefinite
Functionals Part I: General Theory},
 J. Funct. Anal. \textbf{162}, 1999, 52--95

\bibitem{SFLPejsachowiczII} P.M. Fitzpatrick, J. Pejsachowicz, L. Recht, 
\textbf{Spectral Flow and Bifurcation of Critical Points of Strongly-Indefinite
Functionals Part II: Bifurcation of Periodic Orbits of Hamiltonian Systems},
 J. Differential Equations \textbf{163}, 2000, 18--40
 
\bibitem{Jaenich} K. Jänich, \textbf{Vektorraumbündel und der Raum der Fredholmoperatoren}, Math. Ann. \textbf{161}, 1965, 129--142
 
\bibitem{KrasnoselskiiI} M.A. Krasnosel'skii, A.I. Perov, A.I. Povolotskiy, P.P. Zabreiko, \textbf{Plane Vector Fields}, Academic Press, 1966 
 
\bibitem{KrasnoselskiiII} M.A. Krasnosel'skii, P.P. Zabreiko, \textbf{Geometrical methods of nonlinear analysis}, Springer-Verlag, 1984

\bibitem{Lawson} H.B. Lawson, M-L Michelsohn, \textbf{Spin Geometry}, Princeton University Press, 1989

\bibitem{Lang} S. Lang, \textbf{Differential and Riemannian Manifolds}, Grad. Texts in Math. \textbf{160}, Springer, 1995

\bibitem{LeschSpecFlowUniqu} M. Lesch, \textbf{The Uniqueness of the Spectral Flow on Spaces of Unbounded Self-adjoint Fredholm Operators}, Cont. Math. Amer. Math. Soc. \textbf{366}, 2005, 193--224

\bibitem{Masiello} A. Masiello, \textbf{Variational methods in Lorentzian geometry}, Pitman Research Notes in Mathematics Series \textbf{309}, Longman Scientific \& Technical, 1994

\bibitem{Pejsachowicz} M. Musso, J. Pejsachowicz, A. Portaluri, \textbf{A Morse Index Theorem for Perturbed Geodesics on Semi-Riemannian Manifolds}, Topol. Methods Nonlinear Anal. \textbf{25}, 2005, 69--99

\bibitem{NicolaescuMem} L. Nicolaescu, \textbf{Generalized Symplectic Geometries and the Index of Families of Elliptic Problems}, Memoirs AMS \textbf{128}, 1997


\bibitem{Phillips} J. Phillips, \textbf{Self-adjoint Fredholm Operators and Spectral Flow}, Canad. Math. Bull. \textbf{39}, 1996, 460--467


\bibitem{JacBifIch} J. Pejsachowicz, N. Waterstraat, \textbf{Bifurcation of critical points for continuous families of $C^2$ functionals of Fredholm type}, J. Fixed Point Theory Appl.  \textbf{13}, 2013, 537--560, arXiv:1307.1043 [math.FA]

\bibitem{AleBifIch} A. Portaluri, N. Waterstraat, \textbf{Bifurcation results for critical points of families of functionals}, Differential Integral Equations  \textbf{27}, 2014, 369--386,	arXiv:1210.0417 [math.DG]

\bibitem{Rabinowitz} P.H. Rabinowitz, \textbf{Minimax Methods in Critical Point Theory with Applications to Differential Equations}, Conf. Board Math. Sci. \textbf{65}, 1986

\bibitem{Robbin-Salamon-Maslov} J. Robbin, D. Salamon, \textbf{The Maslov index for paths}, Topology \textbf{32}, 1993, 827--844

\bibitem{Robbin-Salamon} J. Robbin, D. Salamon, \textbf{The Spectral Flow and the Maslov Index}, Bull. London Math. Soc \textbf{27}, 1995, 1--33

\bibitem{Salamon-Zehnder} D. Salamon, E. Zehnder, \textbf{Morse Theory for Periodic Solutions of Hamiltonian Systems and the Maslov Index}, Comm. Pure Appl. Math. \textbf{45}, 1992, 1303--1360

\bibitem{Segal} G. Segal, \textbf{Equivariant K-Theory}, Publ. Math. Inst. Hautes Etudes Sci. \textbf{34}, 1968, 129--151

\bibitem{indbundleIch} N. Waterstraat, \textbf{The index bundle for Fredholm morphisms}, Rend. Sem. Mat. Univ. Politec. Torino \textbf{69}, 2011, 299--315

\bibitem{MorseIch}  N. Waterstraat, \textbf{A K-theoretic proof of the Morse index theorem in semi-Riemannian Geometry}, Proc. Amer. Math. Soc. \textbf{140}, 2012, 337--349

\bibitem{SpinorsIch} N. Waterstraat, \textbf{A remark on the space of metrics having non-trivial harmonic spinors}, J. Fixed Point Theory Appl. \textbf{13}, 2013, 143--149, arXiv:1206.0499 [math.SP]

\bibitem{Weidmann} J. Weidmann, \textbf{Linear Operators in Hilbert Spaces}, Graduate Texts in Mathematics \textbf{68}, Springer-Verlag, 1980

\bibitem{Zeidler} E. Zeidler, \textbf{Nonlinear functional analysis and its applications. II/A. Linear monotone operators}, Springer-Verlag, 1990

\vspace{1cm}
Nils Waterstraat\\
Institut für Mathematik\\
Humboldt-Universität zu Berlin\\
Unter den Linden 6\\
10099 Berlin\\
Germany\\
E-mail: waterstn@math.hu-berlin.de

\end{document}